\title{Colimits of categories, zig-zags and necklaces }
\author{Redi Haderi \thanks{Throughout the preparation of this document, the author has been supported, financially and otherwise, by his wife Naisila Puka.}}
\date{\today}
\newtheorem{theorem}{Theorem}[section]
\newtheorem{proposition}{Proposition}[section]
\newtheorem{definition}{Definition}[section]
\newtheorem{lemma}{Lemma}[section]
\theoremstyle{remark}
\theoremstyle{remark}
\newtheorem*{remark}{Remark}
\theoremstyle{remark}
\newtheorem*{note}{Note}
\theoremstyle{remark}
\newtheorem{construction}{Construction}[section]
\def\slashedarrowfill@#1#2#3#4#5{%
  $\m@th\thickmuskip0mu\medmuskip\thickmuskip\thinmuskip\thickmuskip
   \relax#5#1\mkern-7mu%
   \cleaders\hbox{$#5\mkern-2mu#2\mkern-2mu$}\hfill
   \mathclap{#3}\mathclap{#2}%
   \cleaders\hbox{$#5\mkern-2mu#2\mkern-2mu$}\hfill
   \mkern-7mu#4$%
}
\def\rightslashedarrowfill@{%
  \slashedarrowfill@\relbar\relbar\mapstochar\rightarrow}
\newcommand\xslashedrightarrow[2][]{%
  \ext@arrow 0055{\rightslashedarrowfill@}{#1}{#2}}
\DeclareMathOperator*{\colim}{colim}
\newcommand{\cc}[1]{\mathcal{#1}}
\newcommand{\bb}[1]{\mathbb{#1}}
\newcommand{\ff}[1]{\mathfrak{#1}}
\newcommand{\ra}[1]{\overrightarrow{#1}}
\newcommand{\D}[1]{\Delta^{#1}}
\newcommand{\set}{\textbf{Set}}
\newcommand{\sset}{\textbf{sSet}}
\newcommand{\cat}{\textbf{Cat}}
\newcommand{\dcat}{\textbf{DCat}}
\newcommand{\spx}{\cc{S}(X)}
\begin{document}

\maketitle

\begin{abstract}
    Given a diagram of small categories $F : \cc{J} \rightarrow \cat$, we provide a combinatorial description of its colimit in terms of the indexing category $\cc{J}$ and the categories and functors in the diagram $F$. We introduce certain double categories of zig-zags in order to keep track of the necessary identifications. We found these double categories necessary, but also explanatory. 
    
    When applied pointwise in the simplicially enriched setting, our constructions offer a shorter proof of the necklace theorem of Dugger and Spivak by direct computation. 
\end{abstract}

\section{Introduction and summary} \label{sec_intro}

This note was originally intended as an appendix in another project, but grew as an independent work. The motivating objects of study are small diagrams 
$$F : \cc{J} \rightarrow \cat_\D{}$$
in the category of simplicially enriched categories. 
The colimits of such diagrams are generally deemed hard to understand. Important constructions, such as the free simplicially enriched category $\ff{C} X$ generated by a simplicial set $X$, arise as such colimits. 

\subsection*{Colimits of simplicial categories}

Let $\cc{C} \in \cat_{\D{}}$. Given objects $a, b \in \cc{C}$ and $[p] \in \D{}$, we refer to a $p$-simplex $u \in \cc{C}(a,b)_p$ in the mapping space as a $p$-\textit{arrow}. Let $\cc{C}_p$ be the category whose objects are those of $\cc{C}$ and morphisms are $p$-arrows.

The assignment $[p] \mapsto \cc{C}_p$ creates a simplicial object in $\cat$
$$\cc{C}_* : \D{op} \rightarrow \cat$$
which, in turn, produces a functor
$$(-)_* : \cat_{\D{}} \rightarrow \cat^{\D{op}}$$
The functor $(-)_*$ is a fully faithful embedding. Its image consists of \textit{discrete} simplicial objects in $\cat$ : those objects $\cc{D} \in \cat^{\D{op}}$ for which the categories $\cc{D}_p$ have the same objects for all $[p]$, and such that the functors $\theta^* : \cc{D}_p \rightarrow \cc{D}_q$ are identity on objects for all morphisms $\theta : [q] \rightarrow [p]$ in $\D{}$. 

The functor $(-)_*$ admits a right adjoint
$$(-)_* : \cat_{\D{}} \leftrightarrows \cat^{\D{op}} : (-)_v$$
The functor $(-)_v$ assigns to a simplicial object $\cc{D}$ its \textit{vertical}\footnote{This term is adopted from double category theory, in an analogy developed in \cite{haderi2023simplicial}.} simplicial category $\cc{D}_v$. Roughly speaking, $\cc{D}_v$ is the subobject of $\cc{D}$ spanned by totally degenerate objects of $\cc{D}_p$ for $[p] \in \D{}$, or equivalently the largest discrete subobject of $\cc{D}$. 

In light of the above, given a diagram 
$$F : \cc{J} \rightarrow \cat_{\D{}}$$
of simplicial categories we will have 
$$(\colim_{i \in \cc{J}} F(i))_p = \colim_{i \in \cc{J}} F(i)_p$$
for all $[p] \in \D{}$. Hence, the problem of understanding the left-hand side reduces to understanding the right-hand side, which is a colimit in $\cat$.

\subsection*{Colimits of categories}

We turn our attention to (small) diagrams of (small) categories 
$$F : \cc{J} \rightarrow \cat$$
For practical ease, we adopt the following notation for the rest of this paper:
\[
\begin{matrix}
    F: & i & \mapsto & \cc{C}_i \\
    & u : i \rightarrow j & \mapsto & \tilde{u} : \cc{C}_i \rightarrow \cc{C}_j
\end{matrix}
\]
Let $\cc{C} = \colim\limits_{i \in \cc{J}} \cc{C}_i$ 

In some sense, there is no problem in constructing the colimit $\cc{C}$. For example, utilizing the free category-nerve adjunction $\tau : \sset \leftrightarrows \cat : N$, we may derive that $\cc{C} \cong \tau(\colim\limits_{i \in \cc{J}} N(\cc{C}_i))$ (for instance, see \cite[Corollary 4.5.16]{riehl2017category}). 

While we may use the above observation to imply the existence of $\cc{C}$, its structure remains \textit{implicit}. On objects, given that the functor $Ob : \cat \rightarrow \set$, which assigns to a small category its set of objects, has a right adjoint, we may conclude that 
$$Ob(\cc{C}) \cong \colim_{i \in \cc{J}} Ob(C_i)$$
The right-hand side is well-understood as a colimit in $\set$. This way, the objects of $\cc{C}$ are equivalence classes $[a]$ represented by objects $a \in \cc{C}_i$, $i \in \cc{J}$.

The problem is in describing the morphism sets $\cc{C}([a], [b])$ given two objects $[a], [b] \in \cc{C}$. An explicit description would have to be in terms on the objects and morphisms of the categories $\cc{C}_i$, $i \in \cc{J}$, and the diagram $F$.

Taking a step back, observe that, in constructing $\cc{C}$, for a morphism $u : i \rightarrow j$ in $\cc{J}$ and a morphism $f: a \rightarrow b$ in $\cc{C}_i$, we ought to identify $a \sim \Tilde{u}(a)$, $b \sim \Tilde{u}(b)$ and $f \sim \Tilde{u}(f)$. 
These identifications create the following basic type of situation. In case of a \textit{roof} in $\cc{J}$,
\[\begin{tikzcd}
	& k \\
	i && j
	\arrow["l"', from=1-2, to=2-1]
	\arrow["r", from=1-2, to=2-3]
\end{tikzcd}\]
consider objects $a \in \cc{C}_i$, $b \in \cc{C}_j$, $c \in \cc{C}_k$ and morphisms $f : a \rightarrow \Tilde{l}(c)$, $g : \Tilde{r}(c) \rightarrow b$
\begin{center}
    \begin{tikzpicture}[baseline=(current bounding box.center)]
 \draw[thin] (0.75,0) ellipse (1.5cm and 0.75cm);
 \draw[thin] (5.25,0) ellipse (1.5cm and 0.75cm);
  \draw[thin] (3,1.5) ellipse (1.5cm and 0.75cm);
  
  \node (a) at (0,0) {$a$};
  \node (tildec) at (1.5,0) {$\tilde{l}(c)$};
  \node (tilder) at (4.5,0) {$\tilde{r}(c)$};
  \node (b) at (6,0) {$b$};
  \node (c) at (3,1.5) {$c$};

  \draw[->] (a) -- node[above] {$f$} (tildec);
  \draw[->] (tilder) -- node[above] {$g$} (b);
  \draw[dashed] (c) -- (tildec);
  \draw[dashed] (c) -- (tilder);
\end{tikzpicture}
\end{center}
After the identifications, the arrows $f$ and $g$ become composable in $\cc{C}$ and create a new morphism $(f,g)$. 

General morphisms in $\cc{C}$ are simply such sequences, except possibly longer and indexed by \textit{zig-zags} in $\cc{J}$ rather than mere roofs. This idea is formulated in the following theorem, which is our first contribution.

\begin{theorem}
    With notation as above, an element of the set $\cc{C}([a], [b])$ is represented by the following three pieces of data:
    \begin{itemize}
        \item [(i)] A choice of representatives $a \in C_i$ and $b \in C_j$ for some $i,j \in \cc{J}$.
        
        \item[(ii)] A zig-zag in $\cc{J}$ which connects $i$ and $j$
        \[\begin{tikzcd}[column sep=scriptsize]
	& {j_1} && {j_2} &&&& {j_n} \\
	{i=i_0} && {i_1} && {i_2} & \dots & {i_{n-1}} && {i_n = j}
	\arrow["{l_1}"', from=1-2, to=2-1]
	\arrow["{l_2}"', from=1-4, to=2-3]
	\arrow["{l_{n-1}}"', from=1-8, to=2-7]
	\arrow["{r_1}", from=1-2, to=2-3]
	\arrow["{r_{n-1}}", from=1-8, to=2-9]
	\arrow["{r_2}", from=1-4, to=2-5]
\end{tikzcd}\]

    \item[(iii)] A sequence of objects $a_k \in \cc{C}_{j_k}$ and a  “chain" of morphisms $f_k \in C_{i_k}$ of the following form:
    $$a \xrightarrow{f_0} \Tilde{l}_1(a_1) \  , \  \Tilde{r}_1(a_1) \xrightarrow{f_1} \Tilde{l}_2(a_2) \ , \dots , \  \Tilde{r}_n(a_n) \xrightarrow{f_n} b$$
    \end{itemize}
     subject to a relation described in Section \ref{sec_colimthm}. 
\end{theorem}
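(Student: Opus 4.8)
The plan is to realize the data (i)--(iii) modulo the relation of Section~\ref{sec_colimthm} as a genuine category $\cc{D}$ and then prove that $\cc{D}$, together with an evident cocone, satisfies the universal property of $\colim_{i \in \cc{J}} \cc{C}_i$; the asserted description of $\cc{C}([a],[b])$ is then read off from the canonical isomorphism $\cc{D} \cong \cc{C}$. I set $\mathrm{Ob}(\cc{D}) = \colim_{i} \mathrm{Ob}(\cc{C}_i)$, which is legitimate because $\mathrm{Ob}$ is a left adjoint, and I take $\cc{D}([a],[b])$ to be the set of zig-zag-chains from a representative of $[a]$ to a representative of $[b]$, modulo $\sim$. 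Composition is concatenation: given a zig-zag-chain $\xi : [a] \to [b]$ ending at a representative $b \in \cc{C}_{i_n}$ and a zig-zag-chain $\eta : [b] \to [c]$ beginning at a representative $b' \in \cc{C}_{i_0'}$, I reconcile $b$ and $b'$ using the relation (they agree in the colimit of object sets, hence are linked by a zig-zag in $\cc{J}$ that I splice in as a stage carrying identity morphisms) and then string the two chains together. The empty zig-zag over a single index with chain $a \xrightarrow{\mathrm{id}} a$ is the identity on $[a]$, and I note for later that any zig-zag-chain all of whose morphisms $f_k$ are identities collapses, under $\sim$, to such an identity.

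First I would check that $\cc{D}$ is a well-defined category: that concatenation descends to the quotient by $\sim$, is associative, and has the stated units. Each of these is bookkeeping on zig-zags once the relation is fixed; the relevant moves are (a) transporting a representative of an object across a diagram morphism $\tilde{u}$, (b) deleting a stage whose morphism is an identity, and (c) fusing two consecutive stages lying over the same index of $\cc{J}$. I would then assemble the cocone: for each $i$ a functor $\iota_i : \cc{C}_i \to \cc{D}$ sending an object $a$ to $[a]$ and a morphism $f : a \to b$ to the length-$0$ zig-zag over $i$ with chain $a \xrightarrow{f} b$. Functoriality is immediate, and the cocone identity $\iota_j \circ \tilde{u} = \iota_i$ for $u : i \to j$ is exactly move (a) applied to a one-stage chain.

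The heart of the proof is the universal property. Given any cocone $(\phi_i : \cc{C}_i \to \cc{E})_{i \in \cc{J}}$, I define $\Phi : \cc{D} \to \cc{E}$ on objects by $[a] \mapsto \phi_i(a)$ (well-defined by the cocone condition on objects) and on a morphism $\xi$ presented by data (i)--(iii) by
\[
\Phi(\xi) = \phi_{i_n}(f_n) \circ \cdots \circ \phi_{i_1}(f_1) \circ \phi_{i_0}(f_0),
\]
which makes sense because the cocone equations give $\phi_{i_{k-1}}(\tilde{l}_k a_k) = \phi_{j_k}(a_k) = \phi_{i_k}(\tilde{r}_k a_k)$, so consecutive factors are composable in $\cc{E}$. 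I then verify that $\Phi$ sends each generating move (a)--(c) to an equality in $\cc{E}$, hence descends to the quotient by $\sim$; that $\Phi$ is functorial (concatenation maps to composition by construction); and that $\Phi \circ \iota_i = \phi_i$. Uniqueness is then automatic: $\Phi$ is forced on objects and on every $\iota_i(f)$, the bridging stages are identities in $\cc{D}$ (so $\Phi$ sends them to identities), and every morphism of $\cc{D}$ is a composite of such pieces.

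The hard part will be showing that $\sim$ is calibrated exactly right. On one side, $\Phi$ must descend to the quotient for every cocone, which forces $\sim$ not to over-identify (anything it identifies must be equal in all cocones); on the other, $\cc{D}$ must carry no spurious morphisms, so that the universal property, once verified, really yields $\cc{D} \cong \cc{C}$ rather than a finer category. A clean way to pin this down --- and a useful independent check --- is the isomorphism $\cc{C} \cong \tau(\colim_i N\cc{C}_i)$ coming from the free-category/nerve adjunction. Since $\tau$ is a left adjoint and colimits of simplicial sets are computed levelwise, a morphism of $\cc{C}$ is a path of edges of $X := \colim_i N\cc{C}_i$ modulo the relations imposed by $X_2$. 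Each edge is a class of a single morphism in some $\cc{C}_i$, and each matching of an edge's target with the next edge's source is an equality in $\colim_i \mathrm{Ob}(\cc{C}_i)$, witnessed by a zig-zag in $\cc{J}$. Unfolding these witnesses turns a path of edges into precisely a zig-zag-chain, and matching the $2$-simplex relations of $X$ against the moves (a)--(c) confirms that $\sim$ is neither too coarse nor too fine.
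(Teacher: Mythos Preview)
Your core argument---construct the candidate category explicitly, exhibit the cocone $\iota_i$, and verify the universal property by defining $\Phi$ on a representing zig-zag as the composite $\phi_{i_n}(f_n)\circ\cdots\circ\phi_{i_0}(f_0)$---is exactly the paper's proof of Theorem~\ref{thm_colim}, with the well-definedness bookkeeping (your moves (a)--(c)) corresponding to the lemmas in Appendix~\ref{app_prop}.

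Your final paragraph, however, is unnecessary and reflects a small confusion. Once you have verified that $\cc{D}$ satisfies the universal property of the colimit (as you plan in the preceding paragraphs), the isomorphism $\cc{D}\cong\cc{C}$ is immediate: colimits are unique up to canonical isomorphism, so there is no separate question of whether $\sim$ is ``calibrated right'' and no possibility of $\cc{D}$ being a ``finer category'' than $\cc{C}$. The nerve-based cross-check via $\tau(\colim_i N\cc{C}_i)$ is therefore redundant, and the paper does not pursue it.
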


After our thought experiment, the data postulated in the above theorem is intuitive and not difficult to organize and keep track of. For instance, we regard zig-zags as in (ii) as a notion of morphism on the objects of $\cc{J}$, depicted as
\[\begin{tikzcd}
	i & j
	\arrow["{(\bar{l}, \bar{r})}", squiggly, from=1-1, to=1-2]
\end{tikzcd}\]
Composing by concatenation, we form the category of zig-zags $\cc{Z} \cc{J}$. 

We refer to a chain of morphisms as in (iii) as an $F$-\textit{decoration} of the zig-zag $(\Bar{l}, \Bar{r})$. Such a decorated zig-zag can also be thought of as a morphism
\[\begin{tikzcd}
	{(i,a)} & {(j,b)}
	\arrow["{(\bar{l}, \bar{r}, \bar{f})}", squiggly, from=1-1, to=1-2]
\end{tikzcd}\]
in a category $\cc{Z} F$ whose set of objects is $\bigoplus_{i \in \cc{J}} Ob(\cc{C}_i)$.

To address the challenging bit of this line of thought, which is keeping track of the necessary relations, we resort to an excellent organizing structure: \textit{double categories}. We construct double categories $\bb{Z} \cc{J}$ and $\bb{Z} F$ whose 2-cells
\[\begin{tikzcd}
	i & j \\
	{i^\prime} & {j^\prime}
	\arrow[""{name=0, anchor=center, inner sep=0}, squiggly, from=1-1, to=1-2]
	\arrow[""{name=1, anchor=center, inner sep=0}, squiggly, from=2-1, to=2-2]
	\arrow[from=1-1, to=2-1]
	\arrow[from=1-2, to=2-2]
	\arrow["\rho", shorten <=4pt, shorten >=4pt, Rightarrow, from=0, to=1]
\end{tikzcd} \ \ \, \ \ \ 
\begin{tikzcd}
	{(i,a)} & {(j,b)} \\
	{(i^\prime, a^\prime)} & {(j^\prime, b^\prime)}
	\arrow[""{name=0, anchor=center, inner sep=0}, squiggly, from=1-1, to=1-2]
	\arrow[""{name=1, anchor=center, inner sep=0}, squiggly, from=2-1, to=2-2]
	\arrow[from=1-1, to=2-1]
	\arrow[from=1-2, to=2-2]
	\arrow["{\rho^*}", shorten <=4pt, shorten >=4pt, Rightarrow, from=0, to=1]
\end{tikzcd}
\]
encode the appropriate notion of transformation between zig-zags and their decorations. These double categories will be introduced in Section \ref{sec_dzig}.

\subsection*{Reinterpreting the necklace theorem}

Let $X$ be a simplicial set. It is of interest in higher category theory to understand the free simplicial category $\ff{C}X$ generated by $X$. The functor 
$$\ff{C} : \sset \rightarrow \cat_\D{}$$
is the left adjoint of the coherent nerve functor, and this pair has been shown to be a Quillen equivalence between the Joyal model structure on $\sset$ and the Bergner model structure on $\cat_\D{}$ (\cite{bergner2007model}) by Lurie (\cite{lurie2009higher}). In case $X$ is an $\infty$-category (i.e. quasi-category), the simplicial category $\ff{C} X$ may be interpreted as a strictification of some of the compositions in $X$. In case $X$ is (the nerve of) a category, $\ff{C} X$ provides a cofibrant replacement in the Bergner model structure ($X$ being regarded as simplicially enriched by discrete simplicial sets).

In the case $X = \D{n}$, we denote $\ff{C}\D{n} = \bb{\Delta}^n$. For $[p] \in \D{}$, the category of $p$-arrows $\bb{\D{n}}_p$ may be combinatorially described as follows:
\begin{itemize}
    \item [$(\bullet)$] The objects are $0, 1, \dots , n$.
    \item [$(\rightarrow)$] For two objects $i, j$, a $p$-arrow $\ra{U} : i \rightarrow j$ is a \textit{flag} 
    $$U^0 \subseteq U^1 \subseteq \dots \subseteq U^p$$
    of subsets of $\{ i, \dots , j \}$ such that $i, j \in U^0$.
    \item [$(\circ)$] Composition is given by union of sets.
\end{itemize}
For a morphism $\theta : [q] \rightarrow [p]$ in $\D{}$, the identity on objects functor $\theta^* : \bb{\D{n}}_p \rightarrow \bb{\D{n}}_q$ is given, for a morphism $\ra{U} : i \rightarrow j$, by
$$\theta^*(\ra{U}) = (U^{\theta(0)} \subseteq \dots \subseteq U^{\theta(p)})$$
In particular, faces and degeneracies are obtained by deletion and repetition of terms in the flag $\ra{U}$.

We extend the free simplicial category functor $\ff{C}$ from $\D{n}$ to general simplicial sets via Yoneda extension. This means the following. For a simplicial set $X$, let $\spx$ denote its category of simplices, given by:
\begin{itemize}
    \item [$(\bullet)$] Objects the simplices $x \in X_n$ for $[n] \in \D{}$.
    \item[$(\rightarrow)$] Morphisms $x \rightarrow y$ commutative triangles of simplicial maps
    \[\begin{tikzcd}
	{\D{n}} && {\D{n}} \\
	& X
	\arrow["x"', from=1-1, to=2-2]
	\arrow["y", from=1-3, to=2-2]
	\arrow["\theta", from=1-1, to=1-3]
\end{tikzcd}\]
\end{itemize}
The assignment $(x \in X_n) \mapsto \bb{\D{n}}$ gives rise to a diagram of simplicial categories
$$\chi : \spx \rightarrow \cat_{\D{}}$$
We define $\ff{C} X$ to be the colimit of this functor, typically expressed as
$$\ff{C}X \cong \colim_{x \in X_n} \bb{\Delta}^n$$

The objects of $\ff{C}X$ are the $0$-simplices of $X$. Given $a,b \in X_0$ and $[p] \in \D{}$, a combinatorial description of the $p$-arrows in $\ff{C} X (a,b)_p$, which extends the one presented above for the case $X = \D{n}$, has been provided by Dugger and Spivak  in \cite{dugger2011rigidification}. We briefly summarize.

Let $\sset_{*,*}$ be the category of bipointed simplicial sets. We regard $\D{n} \in \sset_{*,*}$ with chosen basepoints $0$ and $n$. The wedge sum  $\D{n} \vee \D{m}$ is provided by stringing $\D{n}$ and $\D{m}$ along the endpoint $n$ of $\D{n}$ and $0$ of $\D{m}$. 
A \textit{necklace} is a simplicial set of the form
$$N = \D{n_0} \vee \dots \vee \D{n_k}$$

For a necklace $N$ as above define the following:
\begin{itemize}
    \item [-] (Basepoints). $N$ is regarded as bipointed by the initial vertex of $\D{n_0}$ and the terminal vertex of $\D{n_k}$.
    \item[-] (Beads). The beads of the necklace $N$ are the simplicial sets $\D{n_i}$, $0 \leq i \leq k$. 
    \item[-] (Joins). The set $J_N$ of joins of the necklace $N$ is the union of basepoints of the beads (with the identifications provided by wedging).
    \item[-] (Vertices). The set $V_N$ of vertices of the necklace $N$ is the union of the vertices of the beads (with the identifications provided by wedging). 
\end{itemize}

\begin{theorem}[Dugger and Spivak, Theorem 1.4 \cite{dugger2011rigidification}]

Let $X$ be a simplicial set, $a, b \in X_0$ and $[p] \in \D{}$. An element of the $p$-arrow set $\ff{C}X(a,b)_p$ is represented by a pair $(N \rightarrow X_{a,b}, \ra{U})$ where
\begin{itemize}
    \item [(i)] $N$ is a necklace and $N \rightarrow X_{a,b}$ is a map of bipointed simplicial sets.
    \item[(ii)] $\ra{U}$ is a flag $$ U^0 \subseteq U^1 \subseteq \dots \subseteq U^p$$ of subsets of $V_N$ such that $J_N \subseteq U^0$.  
\end{itemize}
Two such pairs $(N \rightarrow X_{a,b}, \ra{U})$ and $(M \rightarrow X_{a,b}, \ra{V})$ are identified in case there is a commutative triangle of bipointed maps
\[\begin{tikzcd}
	N && M \\
	& {X_{a,b}}
	\arrow[from=1-1, to=2-2]
	\arrow[from=1-3, to=2-2]
	\arrow["f", from=1-1, to=1-3]
\end{tikzcd}\]
such that $f(\ra{U}) = \ra{V}$.

\end{theorem}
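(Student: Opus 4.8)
The plan is to apply our combinatorial description of hom-sets in a colimit of categories (the first theorem above) to the diagram of $p$-arrows attached to $\chi$, and then to recognize the resulting decorated zig-zags as necklaces equipped with flags. Concretely, by the pointwise formula for colimits established above we have $\ff{C}X(a,b)_p = \cc{C}([a],[b])$, where $\cc{C} = \colim_{x \in X_n}(\bb{\D{n}})_p$ is a colimit in $\cat$ indexed by $\cc{J} = \spx$, with $\cc{C}_x = (\bb{\D{n}})_p$ for $x \in X_n$. Since $Ob$ preserves colimits, the objects of $\cc{C}$ are the classes of vertices, i.e. the elements of $X_0$, so $[a], [b]$ are just $a, b$. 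It then remains to unwind the data (i)--(iii) of the first theorem in this case and match them, together with the relation, against the necklace-flag pairs of the statement.

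First I would normalize a decorated zig-zag. At a peak $x_k \in X_{m_k}$ the decoration selects an object $a_k$, which is nothing but a vertex of the simplex $x_k$, and only its images $\tilde{l}_k(a_k)$, $\tilde{r}_k(a_k)$ enter the chain (iii). Using the morphism in $\spx$ from the $0$-simplex $x_k \cdot \langle a_k \rangle \in X_0$ into $x_k$, the relation of Section \ref{sec_colimthm} lets me replace every peak by this $0$-simplex carrying its unique vertex. Dually, a morphism $f_k$ in a valley $z_k \in X_m$ is a flag supported on the interval $\{s_k, \dots, t_k\}$ between its endpoints; restricting along the face inclusion $\D{\{s_k,\dots,t_k\}} \hookrightarrow \D{m}$, the same relation lets me replace each valley by the subsimplex spanned by that interval. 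After this normalization the valleys are exactly the beads $\D{\{s_k,\dots,t_k\}} \to X$ of a necklace $N$, the peaks are exactly the joins $J_N \subseteq V_N$, and the per-bead flags $f_k$ (whose bottom terms contain the bead endpoints) glue along the joins into a single flag $\ra{U}$ on $V_N$ with $J_N \subseteq U^0$. The endpoints $a, b$ become the two basepoints, so the normalized datum is precisely a pair $(N \to X_{a,b}, \ra{U})$ as in (i)--(ii); conversely, reading beads as valleys, joins as peaks and bead-flags as decorations recovers a zig-zag already in normal form, so the assignment is a bijection on normalized representatives.

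The main work, and the step I expect to be the genuine obstacle, is to match the two equivalence relations: the relation generated by the $2$-cells of $\bb{Z}F$ (Section \ref{sec_dzig}) on decorated zig-zags, and the relation generated by commuting triangles $f : N \to M$ of bipointed maps with $f(\ra{U}) = \ra{V}$ on necklace-flag pairs. I would argue in both directions. Given such a triangle $f$, the induced maps of beads and joins assemble, via functoriality of $\chi$, into a $2$-cell of $\bb{Z}F$ respecting the decorations, so $f$ identifies the corresponding zig-zags in $\cc{C}([a],[b])$. Conversely, each generating $2$-cell is built from a single morphism of $\spx$ applied at one peak or one valley, that is, from a vertex inclusion or a face/degeneracy, and each such generator is realized after normalization by a join-wise or bead-wise necklace map; hence $2$-cell-equivalent zig-zags yield necklace-flag pairs connected by maps of the kind Dugger and Spivak allow. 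The two delicate points are (a) confirming that the relation of Section \ref{sec_colimthm} already contains the peak-to-vertex and valley-to-minimal-bead moves used in the normalization, so that distinct classes are not collapsed; and (b) reconciling the asymmetry of the Dugger--Spivak relation (a single map $N \to M$, not a zig-zag of maps) with the symmetric $2$-cell relation. For (b) I would show that every generating move either strictly decreases the total dimension of the beads or is invertible, so that each class admits a reduced terminal representative; equality of these normal forms on both sides then forces the two generated equivalences to coincide.
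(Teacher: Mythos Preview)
Your overall strategy is the paper's: apply the colimit theorem to $\chi_p$, then reduce arbitrary decorated zig-zags to properly decorated necklaces, then match the relation. Your normalization step is exactly the paper's \emph{necklace replacement} (Construction~\ref{const_ncsrep}): replace each peak $y_k$ by the chosen vertex $a_k$ and each valley $x_k$ by the subsimplex spanned between the endpoints of $\ra{U}_k$.

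Where you diverge from the paper is in handling your point~(b). The paper avoids any normal-form argument by observing that necklace replacement is a \emph{functor} $N_* : (\bb{Z}\chi_p)_1 \to (\bb{N}_p X)_1$, indeed right adjoint to the inclusion, with counit the 2-cell $\epsilon_T$ displayed in the construction. This single observation dispatches both of your worries at once: the counit $\epsilon_T$ is a 2-cell in $\bb{Z}\chi_p$ witnessing that every decorated zig-zag is equivalent to its necklace replacement (your~(a)); and functoriality of $N_*$ carries any zig-zag of 2-cells in $(\bb{Z}\chi_p)_1$ between two properly decorated necklaces to a zig-zag of 2-cells in $(\bb{N}_p X)_1$ between the \emph{same} two necklaces, since $N_*$ fixes its image (your~(b)). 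It then only remains to identify 2-cells in $\bb{N}_p X$ with bipointed triangles over $X_{a,b}$ carrying $\ra{U}$ to $\ra{V}$, which is the direct combinatorial check of Proposition~\ref{prop_translate}.

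Your proposed resolution of~(b) via a terminal reduced representative has a gap as stated: the claim that ``every generating move either strictly decreases the total dimension of the beads or is invertible'' is false. A generating 2-cell of type~(ii) in $\bb{Z}\spx$ is built from arbitrary morphisms of $\spx$, and these include degeneracies, which \emph{increase} bead dimension; likewise a bipointed necklace map $|N| \to |M|$ may well enlarge beads. A normal-form argument along these lines can be made to work (Dugger and Spivak's original proof produces a unique totally nondegenerate representative in each class), but it requires more than a dimension count. The reflection argument sidesteps all of this.
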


In light of our previous observations, for $[p] \in \D{}$, the category of $p$-arrows $(\ff{C}X)_p$ arises as the colimit of the diagram 
\[
\begin{matrix}
    \chi_p : & \spx & \rightarrow & \cat \\
    & x \in X_n & \mapsto & \bb{\D{n}}_p
\end{matrix}
\]
Therefore, we can implement our colimit theorem and understand the necklace theorem as follows: 

\begin{itemize}
    \item [(A)] The packet of data $(N \rightarrow X, \ra{U})$ representing a morphism $a \rightarrow b$ in $\ff{C} X(a,b)_p$ can be interpreted as:
    \begin{itemize}
        \item [-] A choice of simplices $x$ and $y$ in $X$ such that $a$ is the initial vertex of $x$ and $b$ is the terminal vertex of $y$.
        \item[-] A zig-zag $x \rightsquigarrow y$ in $\spx$ of the form
       \[\begin{tikzcd}[column sep=scriptsize]
	& {a_1} && \dots && {a_k} \\
	{x = x_0} && {x_1} & \dots & {x_{k-1}} && {x_k = y}
	\arrow["{\omega_1}"', from=1-2, to=2-1]
	\arrow["{\alpha_1}", from=1-2, to=2-3]
	\arrow["{\omega_n}"', from=1-6, to=2-5]
	\arrow["{\alpha_n}", from=1-6, to=2-7]
\end{tikzcd}\]
where $a_i$'s are vertices of $X$ and $\alpha$ and $\omega$ denote initial and terminal vertex inclusions into simplices. 
        \item[-] A chain of $p$-arrows $\ra{U}_1 : a \rightarrow a_1, \dots , \ra{U}_k : a_{n-1} \rightarrow b$, obtained by splitting the sets in $\ra{U}$ according to the joins of $N$. 
    \end{itemize}    

    \item[(B)] On the other hand, our colimit theorem states that a $p$-arrow $a \rightarrow b$ in $\ff{C}X(a,b)_p$ is, in general, represented by
    \begin{itemize}
        \item [-] A choice of simplices $x, y \in X$ such that $a$ is a vertex of $x$ and $b$ a vertex of $y$.

        \item[-] A connecting zig-zag between $x \rightsquigarrow y$ in $\spx$, say 
\[\begin{tikzcd}[column sep=scriptsize]
	& {y_1} && \dots && {y_k} \\
	{x = x_0} && {x_1} & \dots & {x_{k-1}} && {x_k = y}
	\arrow["{l_1}"', from=1-2, to=2-1]
	\arrow["{r_1}", from=1-2, to=2-3]
	\arrow["{l_k}"', from=1-6, to=2-5]
	\arrow["{r_k}", from=1-6, to=2-7]
\end{tikzcd}\]

        \item[-] A choice of vertices $a_i$ in $ x_i$ and a chain $\ra{U}_1 : a \rightarrow a_1 , \ra{U}_2 : a_1 \rightarrow a_2 , \dots , \ra{U}_n : a_n \rightarrow b$, where $\ra{U}_i$ is a $p$-arrow in the category $\bb{\D{n_i}}_p$ corresponding to the simplex $x_i : \D{n_i} \rightarrow X$.
    \end{itemize}

    \item[(C)] Upon scrutiny,  data as in (B) can always be replaced with data as in (A). For instance, in forming the $\chi_p$-decoration in (B) we are not making use of the simplices $y_i$ except in choosing the vertices $a_i$. Such a replacement of data can be carried out systematically, i.e. functorially. We present this in Construction \ref{const_ncsrep} and label it \textit{necklace replacement}. 

    \item[(D)] In the case of data as in (A), the relation between decorated zig-zags used in the colimit theorem translates, on the nose, to the relation in the necklace theorem. This is part of the content of Proposition \ref{prop_translate}. 
\end{itemize}

We could consider the above steps as a proof of the necklace theorem by \textit{direct computation}.
We find our approach to have three advantages:
\begin{itemize}
    \item [-] The proofs, in our opinion, seem to be more transparent as a result of the explicit nature of our colimit theorem.

    \item[-] We provide an explanation of why necklaces appear in the first place in a combinatorial description of $\ff{C}X$. They appear as a natural simplification of general zig-zags which appear in colimits of categories. Another way to put this is to say that necklaces in $X$ are a subclass of zig-zags in $\spx$ sufficient to record all $\chi_p$-decorations in forming the colimit $\ff{C}X$.

    \item[-] We provide an explanation of why the relation between the data in the necklace theorem is what it is. It is a translation in the language of simplicial sets of relations between zig-zags which appear in studying colimits of categories. Moreover, the latter are natural and sensible in their own right. 
\end{itemize}
Ultimately, we let the reader be the judge. This paper represents our attempt in understanding some of the concepts involved in these results.

\subsection*{Conventions}

All categories which appear in this paper are assumed to be small. We have assumed some familiarity with simplicial sets, and used fairly standard notation. Background on simplicial sets is only necessary for the parts of the paper dedicated to necklaces though. 

\subsection*{Note}

As far as we know, our colimit theorem for small categories does not appear in the literature in the form presented here, nor do our double categories of zig-zags or our proof of the necklace theorem.

We apologize in advance for any missing reference, and would be more than happy to retract any claim of originality and include relevant references in future versions of this paper. 

\section{Zig-zags and their origins} \label{sec_set}

Let 
\[
\begin{matrix}
    F : & \cc{J} & \rightarrow & \set \\
    & i & \mapsto & C_i \\
    & i \xrightarrow{u} j & \mapsto & C_i \xrightarrow{\Tilde{u}} C_j
\end{matrix}
\]
be a diagram of sets. An easy exercise in category theory reveals that the colimit of the diagram $F$ is the quotient of the set 
\begin{align*}
    DC &= \bigoplus_{i \in \cc{J}} C_i \\
    & = \{(i,a) \ : \ i \in \cc{J}, a \in \cc{C}_i \}
\end{align*}
by the following relation:
\begin{itemize}
    \item [$(E)$] A pair of elements $(i,a)$ and $(j,b)$ are in the relation $E$ if there is a morphism $u : i \rightarrow j$ in $\cc{J}$ such that $\Tilde{u}(a) = b$.
\end{itemize}
We would like to emphasize a couple of ideas which are implicitly present in the formation of colimits of sets. 

The relation $E$ is naturally recorded by a notion of morphism on the data set $DC$, by regarding a witness $u$ in the relation as a morphism $(i,a) \rightarrow (j,b)$. This is the well-known \textit{category of elements} $\cc{E}(F)$ associated to $F$. In fact, we have 
$$C \cong \pi_0 \cc{E}(F)$$
where $\pi_0 : \cat \rightarrow \set$ is the connected components functor.

The second type structure brought to attention by colimits of sets, $\pi_0$ being involved, is \textit{zig-zags}. In general, two objects of a category $\cc{A}$ represent the same class in $\pi_0 \cc{A}$ if and only if there is a zig-zag of morphisms connecting them. We record a little bit of zig-zag formalism.

 For $n \geq 0$, let $\cc{Z}^n$ be the walking zig-zag of the form
\[\begin{tikzcd}[column sep=scriptsize]
	& {s_1} && {s_2} && \dots && {s_n} \\
	{t_0} && {t_1} && {t_2} & \dots & {t_{n-1}} && {t_n}
	\arrow["{L_1}"{description}, from=1-2, to=2-1]
	\arrow["{R_1}"{description}, from=1-2, to=2-3]
	\arrow["{L_2}"{description}, from=1-4, to=2-3]
	\arrow["{R_2}"{description}, from=1-4, to=2-5]
	\arrow["{L_n}"{description}, from=1-8, to=2-7]
	\arrow["{R_n}"{description}, from=1-8, to=2-9]
\end{tikzcd}\]
obtained by stringing $n$ roofs along endpoints. 
The sequences of morphisms $\Bar{R}$ and $\Bar{L}$ are the morphisms pointing to the left and right respectively. In case $n = 0$, we let $\cc{Z}^0 = 1$ be the terminal category.

For a category $\cc{J}$, we define a zig-zag in $\cc{J}$ to be a functor 
$$(\Bar{l}, \Bar{r}) : \cc{Z}^n \rightarrow \cc{J}$$
The notation $(\Bar{l}, \Bar{r})$ indicates that the zig-zag is comprised of left pointing arrows $l_1, \dots , l_n$ and right pointing arrows $r_1, \dots , r_n$. In case $n = 0$, a functor $\cc{Z}^0 \rightarrow \cc{J}$ picks an object $i \in \cc{J}$. We denote this zig-zag $\{ i \}$ and refer to it as the \textit{trivial} zig-zag at $i$. 

We say that a zig-zag $\cc{Z}^n \rightarrow \cc{J}$ \textit{connects} two objects $i$ and $j$ in case $t_0 \mapsto i$ and $t_n \mapsto j$. For emphasis, we write $\cc{Z}^n \rightarrow \cc{J}_{i,j}$. In fact, we regard $\cc{Z}^n$ as a category with two chosen basepoints $t_0$ and $t_n$. The category of small categories with two chosen basepoints and functors which preserves them is denoted $\cat_{*,*}$. This way $\cc{J}_{i,j} \in \cat_{*,*}$ with basepoints $i$ and $j$ and zig-zags connecting $i$ and $j$ are functors in $\cat_{*,*}$. 
We depict such zig-zags with curly arrows
\[\begin{tikzcd}
	i & j
	\arrow["{(\Bar{l}, \Bar{r})}", squiggly, from=1-1, to=1-2]
\end{tikzcd}\]

Being bipointed categories, we have a wedging operation on walking zig-zags (which strings two zig-zags together), so that 
$$\cc{Z}^n \vee \cc{Z}^m \cong \cc{Z}^{n+m}$$
This operation induces the \textit{concatenation} operation on zig-zags in $\cc{J}$. We obtain a category, denoted $\cc{ZJ}$, whose objects are those of $\cc{J}$ and morphisms are zig-zags.

\begin{definition}[$F$-decoration]
    Let $F : \cc{J} \rightarrow \set$ be a diagram and $(\Bar{l}, \Bar{r}) : i \rightsquigarrow j$ be a zig-zag, say
    \[\begin{tikzcd}[column sep=scriptsize]
	& {j_1} && \dots && {j_n} \\
	{i = i_0} && {i_1} & \dots & {i_{n-1}} && {i_n = j}
	\arrow["{l_1}"{description}, from=1-2, to=2-1]
	\arrow["{r_1}"{description}, from=1-2, to=2-3]
	\arrow["{l_n}"{description}, from=1-6, to=2-5]
	\arrow["{r_n}"{description}, from=1-6, to=2-7]
\end{tikzcd}\]
In notation as above, an $F$-decoration of $(\Bar{l}, \Bar{r})$ consists of a sequence $\Bar{a} = (a_1, \dots , a_{n-1})$, where $a_k \in C_{j_k}$ and such that $\Tilde{r}_k(a_k) = \Tilde{l}_{k+1}(a_{k+1})$ for all $k$.
\end{definition}

Given two elements $(i,a), (j,b) \in DC$, we write
\[\begin{tikzcd}[column sep=scriptsize]
	{(i,a)} && {(j,b)}
	\arrow["{(\Bar{l}, \Bar{r}, \Bar{a})}", squiggly, from=1-1, to=1-3]
\end{tikzcd}\]
to indicate an $F$-decorated zig-zag as in the above definition which connects $a$ and $b$, i.e. such that $\Tilde{l}_1(a_1) = a$ and $\Tilde{r}_n(a_n) = b$. By concatenating in the evident manner, we obtain a category $\cc{Z} F$ with set of objects $DC$. 

To obtain the colimit of $F$, we may form the quotient of $DC$ by the following relation:
\begin{itemize}
    \item [$(Z)$] A pair of elements $(i,a)$ and $(j,b)$ are in the relation $Z$ if there is an $F$-decorated zig-zag $(i,a) \rightsquigarrow (j,b)$ connecting them.
\end{itemize}
With respect to $E$, the relation $Z$ has the advantage of being an equivalence relation. Moreover, it is more explicit and, arguably, explanatory.

\begin{remark}
    The category $\cc{Z} F$ defined above is just the category of zig-zags $\cc{Z}\cc{E}(F)$ associated to the category of elements. While we could have defined $\cc{Z} F$ this way, we intended to emphasize the notion of $F$-decoration, a categorified version of which will be presented in the next section. 
\end{remark}

\section{Colimits of small categories} \label{sec_colim}

We saw how, in studying colimits of sets, the relevant relations are best organized and presented in terms of certain categories. In studying colimits of small categories, we find that the relevant relations are best presented in terms of certain \textit{double categories}. We briefly review the latter and present an explicit construction of colimits in $\cat$. This challenge is mostly about book-keeping of indices and notation. 

\subsection{Two types of arrows? No problem.}

\begin{definition} [Double category]
    A double category is a category object in $\cat$.
\end{definition}

In order to observe double categories in nature, we ought to unpack the data encoded in the definition in terms of combinatorial elements and possibly depict the latter. Let $\bb{D}$ be a double category. The first piece of data encoded in $\bb{D}$ consists of two categories $\bb{D}_0$ and $\bb{D}_1$ related by source and target functors
$$s, t : \bb{D}_1 \rightarrow \bb{D}_0$$
We interpret this data as follows:
\begin{itemize}
    \item [$(\bullet)$] The \textit{objects} of $\bb{D}$ are the objects of the category $\bb{D}_0$.

    \item[$(\downarrow)$] The morphisms in $\bb{D}_0$ are the \textit{vertical} morphisms in $\bb{D}$. 

    \item[$(\xslashedrightarrow{})$] The objects of $\bb{D}_1$, being equipped with a source and target in $\bb{D}_0$, serve as another notion of morphism between objects of $\bb{D}$ which we call \textit{horizontal} morphisms. In order to distinguish the latter from vertical arrows, we depict them as $\xslashedrightarrow[]{}$.

    \item [$(\square)$] A morphism in $\bb{D}_1$, which we write as a double arrow $\rho : u \Rightarrow v$, is a square-shaped \textit{2-cell} 
    \[\begin{tikzcd}[column sep=scriptsize]
	a & b \\
	{a^\prime} & {b^\prime}
	\arrow["f"', from=1-1, to=2-1]
	\arrow["g", from=1-2, to=2-2]
	\arrow[""{name=0, anchor=center, inner sep=0}, "u", "\shortmid"{marking}, from=1-1, to=1-2]
	\arrow[""{name=1, anchor=center, inner sep=0}, "v"', "\shortmid"{marking}, from=2-1, to=2-2]
	\arrow["\rho", shorten <=6pt, shorten >=6pt, Rightarrow, from=0, to=1]
\end{tikzcd}\]
whose boundary is comprised of vertical and horizontal arrows. A cell as above records the fact that $s(\rho) = f$ and $t(\rho) = g$. 
\end{itemize}

We have two types of composition of arrows and 2-cells: vertical and horizontal. Vertical arrows compose as prescribed in $\bb{D}_0$. Moreover, 2-cells compose vertically as prescribed by $\bb{D}_1$:
\[\begin{tikzcd}[column sep=scriptsize]
	a & b \\
	{a^\prime} & {b^\prime} \\
	{a^{\prime\prime}} & {b^{\prime\prime}}
	\arrow["f"', from=1-1, to=2-1]
	\arrow["g", from=1-2, to=2-2]
	\arrow[""{name=0, anchor=center, inner sep=0}, "u", "\shortmid"{marking}, from=1-1, to=1-2]
	\arrow[""{name=1, anchor=center, inner sep=0}, "v"{description}, "\shortmid"{marking}, from=2-1, to=2-2]
	\arrow["{f^\prime}"', from=2-1, to=3-1]
	\arrow["{g^\prime}", from=2-2, to=3-2]
	\arrow[""{name=2, anchor=center, inner sep=0}, "w"', "\shortmid"{marking}, from=3-1, to=3-2]
	\arrow["\rho", shorten <=6pt, shorten >=6pt, Rightarrow, from=0, to=1]
	\arrow["{\rho^\prime}", shorten <=6pt, shorten >=6pt, Rightarrow, from=1, to=2]
\end{tikzcd} \xrightarrow{compose}
\begin{tikzcd}[column sep=scriptsize]
	a & b \\
	{a^\prime} & {b^\prime}
	\arrow[""{name=0, anchor=center, inner sep=0}, "u", "\shortmid"{marking}, from=1-1, to=1-2]
	\arrow["{g^\prime g}", from=1-2, to=2-2]
	\arrow["{f^\prime f}"', from=1-1, to=2-1]
	\arrow[""{name=1, anchor=center, inner sep=0}, "w"', "\shortmid"{marking}, from=2-1, to=2-2]
	\arrow["{\rho^\prime \rho}", shorten <=6pt, shorten >=6pt, Rightarrow, from=0, to=1]
\end{tikzcd}
\]
Horizontal composition in $\bb{D}$ is provided by the composition functor
$$\otimes  : \bb{D}_1 \times_{\bb{D}_0} \bb{D}_1 \rightarrow \bb{D}_1$$
which is part of the data for $\bb{D}$.
\[\begin{tikzcd}[column sep=scriptsize]
	a & b & c \\
	{a^\prime} & {b^\prime} & {c^\prime}
	\arrow[""{name=0, anchor=center, inner sep=0}, "u", "\shortmid"{marking}, from=1-1, to=1-2]
	\arrow["g"{description}, from=1-2, to=2-2]
	\arrow["f"', from=1-1, to=2-1]
	\arrow[""{name=1, anchor=center, inner sep=0}, "w"', "\shortmid"{marking}, from=2-1, to=2-2]
	\arrow["h", from=1-3, to=2-3]
	\arrow[""{name=2, anchor=center, inner sep=0}, "{u^\prime }", "\shortmid"{marking}, from=1-2, to=1-3]
	\arrow[""{name=3, anchor=center, inner sep=0}, "{w^\prime}"', "\shortmid"{marking}, from=2-2, to=2-3]
	\arrow["{ \rho}", shorten <=6pt, shorten >=6pt, Rightarrow, from=0, to=1]
	\arrow["\theta", shorten <=6pt, shorten >=6pt, Rightarrow, from=2, to=3]
\end{tikzcd} \xrightarrow{compose}
\begin{tikzcd}[column sep=scriptsize]
	a && c \\
	{a^\prime} && {c^\prime}
	\arrow["g"', from=1-1, to=2-1]
	\arrow["h", from=1-3, to=2-3]
	\arrow[""{name=0, anchor=center, inner sep=0}, "{u^\prime \otimes u}", "\shortmid"{marking}, from=1-1, to=1-3]
	\arrow[""{name=1, anchor=center, inner sep=0}, "{w^\prime \otimes w}"', "\shortmid"{marking}, from=2-1, to=2-3]
	\arrow["{\theta \otimes \rho}", shorten <=6pt, shorten >=6pt, Rightarrow, from=0, to=1]
\end{tikzcd}
\]
These compositions are \textit{associative} and \textit{unital} and they satisfy an \textit{interchange law} similar to that of 2-categories (which encodes the functoriallity of $\otimes$). 

We see from the above description, that double categories arise when we have two notions of morphism between a given class of objects. As the latter happens a lot, there are many interesting examples which we leave to the reader to explore. Recently, the literature on double categories has grown significantly. 

We also see that the definition of double category is symmetric with respect to the vertical and horizontal direction. However, as our notation suggests, our preference is to reserve the vertical direction for “traditional" morphisms and let the horizontal direction record “exotic" types of morphism (such as zig-zags for instance). 

\subsection{Double categories of zig-zags} \label{sec_dzig}

Let $\cc{J}$ be a category. We may form a double category $\bb{Z} \cc{J}$ whose objects and vertical morphisms are those of $\cc{J}$, while horizontal morphisms are zig-zags in $\cc{J}$. A 2-cell 
\[\begin{tikzcd}
	i & j \\
	{i^\prime} & {j^\prime}
	\arrow["{\rho_i}"', from=1-1, to=2-1]
	\arrow["{\rho_j}", from=1-2, to=2-2]
	\arrow[""{name=0, anchor=center, inner sep=0}, "{(\Bar{l}, \Bar{r})}", squiggly, from=1-1, to=1-2]
	\arrow[""{name=1, anchor=center, inner sep=0}, "{(\Bar{l}^\prime, \Bar{r}^\prime)}"', squiggly, from=2-1, to=2-2]
	\arrow["\rho", shorten <=6pt, shorten >=6pt, Rightarrow, from=0, to=1]
\end{tikzcd}\]
where $(\Bar{l}, \Bar{r}) : \cc{Z}^n \rightarrow \cc{J}$ and $(\Bar{l}^\prime, \Bar{r}^\prime) : \cc{Z}^m \rightarrow \cc{J}$,
consists of:
\begin{itemize}
    \item [-] A functor $\theta : \cc{Z}^n \rightarrow \cc{Z}^m$ which preserves the endpoints (the first and last target objects). 

    \item[-] A natural transformation $\rho : (\Bar{l}, \Bar{r}) \Rightarrow (\Bar{l}^\prime, \Bar{r}^\prime)\theta$
    \[\begin{tikzcd}
	{\mathcal{Z}^n} && {\mathcal{Z}^m} \\
	& {\mathcal{J}}
	\arrow[""{name=0, anchor=center, inner sep=0}, "{(\Bar{l}, \Bar{r})}"', from=1-1, to=2-2]
	\arrow["{(\Bar{l}^\prime, \Bar{r}^\prime)}", from=1-3, to=2-2]
	\arrow["\theta", from=1-1, to=1-3]
	\arrow["\rho"{description}, shorten <=8pt, shorten >=8pt, Rightarrow, from=0, to=1-3]
\end{tikzcd}\]
\end{itemize}
We can describe 2-cells in $\bb{Z} \cc{J}$ in more explicit terms. 

\begin{lemma}
    There is a bijection
    $$\cat_{*,*}(\cc{Z}^n , \cc{Z}^m) \cong \textbf{Surj}([n], [m])$$
    where the set on the right has as elements order preserving surjections $[n] \rightarrow [m]$ in $\D{}$.
\end{lemma}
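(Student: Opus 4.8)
The plan is to reduce the statement to a finite combinatorial count and then exhibit the bijection by recording how a functor acts on the bottom row $t_0, \dots, t_n$ of $\cc{Z}^n$. The first step is to observe that $\cc{Z}^n$ and $\cc{Z}^m$ are \emph{thin}: each generating arrow $L_k$ or $R_k$ is the only morphism between its source and target, and there are no nontrivial composites, so between any two objects there is at most one morphism. Consequently a functor $\theta : \cc{Z}^n \to \cc{Z}^m$ is completely determined by its effect on objects, and ``being a functor'' is the same as ``being order-preserving'' for the underlying posets (whose Hasse diagrams are the displayed zig-zags). Thus $\cat_{*,*}(\cc{Z}^n,\cc{Z}^m)$ is identified with the set of order-preserving maps of these posets that fix $t_0$ and send $t_n \mapsto t_m$, turning the claim into a finite count.

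Next I would define the two assignments. Forward: given $\theta$, and \emph{granting} that each $\theta(t_j)$ is again a valley, I set $\phi(j)$ to be its index, so $\theta(t_j) = t_{\phi(j)}$, and claim $\phi : [n] \to [m]$ is an order-preserving surjection. That $\phi$ can increase by at most one at each step comes from the connecting peaks: since $s_j \leq t_{j-1}, t_j$, monotonicity forces $\theta(s_j)$ below both $\theta(t_{j-1})$ and $\theta(t_j)$, and two valleys of $\cc{Z}^m$ admit a common lower bound only when their indices coincide or are adjacent; together with $\phi(0)=0$, $\phi(n)=m$ this yields surjectivity. Backward: from a surjection $\phi$ I build the \emph{collapse functor} $\theta_\phi$, which sends $t_j \mapsto t_{\phi(j)}$, collapses the $k$-th roof to the point $t_{\phi(k)}$ when $\phi(k-1)=\phi(k)$, and sends it isomorphically onto the $\phi(k)$-th roof when $\phi(k)=\phi(k-1)+1$. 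One checks directly that $\theta_\phi$ is order- and endpoint-preserving, and that the two assignments are mutually inverse, since $\phi$ is read off from the bottom row of $\theta_\phi$.

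The step I expect to be the crux is the \emph{rigidity} needed to make the forward assignment well defined and the inverse exhaustive: I must show that an endpoint-preserving functor sends every valley to a valley (and not to a peak) and is then wholly determined by the induced $\phi$, so that the collapse functors account for all of $\cat_{*,*}(\cc{Z}^n,\cc{Z}^m)$. I would organize this by induction on $n$ via the wedge decomposition $\cc{Z}^n \cong \cc{Z}^1 \vee \cc{Z}^{n-1}$ established above: a bipointed functor out of a wedge is a compatible pair of bipointed functors, and the inductive step reduces to verifying that the wedge vertex $t_1$ lands on a valley of $\cc{Z}^m$, after which the base case $\cat_{*,*}(\cc{Z}^1,\cc{Z}^m)$ (which is nonempty exactly when $m \leq 1$, matching $\textbf{Surj}([1],[m])$) completes the count. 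This recursion mirrors the Vandermonde identity $\binom{n}{m} = \sum_{l} \binom{n_1}{l}\binom{n_2}{m-l}$ with $n_1+n_2=n$, obeyed by $|\textbf{Surj}([n],[m])|$, giving an independent numerical check. I would watch this rigidity step most carefully, since it is precisely where ``degenerate'' images of the connecting vertices threaten to produce functors outside the collapse family.
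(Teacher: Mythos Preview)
Your overall strategy coincides with the paper's: both lean on the wedge decomposition $\cc{Z}^n \cong \cc{Z}^1 \vee \cdots \vee \cc{Z}^1$ and attempt to count choices roof by roof. You go further than the paper in isolating the ``rigidity'' step (valleys must go to valleys, and the functor is then determined by the induced $\phi$) as the crux. You are right to be wary of it, because it in fact fails, and with it the lemma as literally stated.

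A small counterexample: take $n=2$, $m=1$. There are only two surjections $[2]\to[1]$, but there are five bipointed functors $\cc{Z}^2 \to \cc{Z}^1$. Fixing $t_0\mapsto t_0$ and $t_2\mapsto t_1$ and recording $(\theta(s_1),\theta(t_1),\theta(s_2))$, each of
\[
(t_0,t_0,s_1),\quad (s_1,t_0,s_1),\quad (s_1,s_1,s_1),\quad (s_1,t_1,s_1),\quad (s_1,t_1,t_1)
\]
gives an order-preserving map of the underlying posets, hence a functor. The third sends the valley $t_1$ to the peak $s_1$, so your forward map is not always defined; the first two agree on all valleys yet differ on $s_1$, so even when $\phi$ exists it does not determine $\theta$. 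Your inductive step has a related slip: a bipointed functor out of $\cc{Z}^1 \vee \cc{Z}^{n-1}$ is not a pair of \emph{bipointed} functors on the pieces, since the wedge vertex $t_1$ may land anywhere in $\cc{Z}^m$. The paper's own argument shares this gap (its ``two choices for the first copy of $\cc{Z}^1$'' overlooks, for instance, $t_0\mapsto t_0,\ s_1\mapsto s_1,\ t_1\mapsto t_0$). The bijection does hold, and both arguments go through verbatim, if one restricts $\cat_{*,*}(\cc{Z}^n,\cc{Z}^m)$ to those functors sending each roof either isomorphically onto a roof of $\cc{Z}^m$ or constantly onto a valley --- precisely your collapse functors --- which is the class the paper actually uses when generating the $2$-cells of $\bb{Z}\cc{J}$.
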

 \begin{proof}
      This is easy to see, since we have
$$\cc{Z}^n \cong \underbrace{\cc{Z}^1 \vee \dots \vee \cc{Z}^1}_{n-\text{times}} $$
In creating a functor $\cc{Z}^n \rightarrow \cc{Z}^m$ which preserves endpoints, for the first copy of $\cc{Z}^1$ we have two choices: either map it to the first endpoint of $\cc{Z}^m$ or identically to the first copy of $\cc{Z}^1$ in $\cc{Z}^m$. And so on for the rest.
 \end{proof}

In light of the above lemma, the 2-cells in $\bb{Z}\cc{J}$ are generated by concatenation (horizontal composition) by the following two types of cells: 
\begin{itemize}
    \item [(i)] Cells indexed by the map $s^0 : [1] \rightarrow [0]$, which correspond to commutative squares
    \[\begin{tikzcd}
	& {j_1} \\
	i && j \\
	& k
	\arrow["l"', from=1-2, to=2-1]
	\arrow["r", from=1-2, to=2-3]
	\arrow["{\rho_i}"', from=2-1, to=3-2]
	\arrow["{\rho_j}", from=2-3, to=3-2]
\end{tikzcd}\]
Such a 2-cell has horizontal source $(l,r)$ and target the trivial zig-zag $\{ k\}$.

    \item[(ii)] Cells indexed by the identity $id : [1] \rightarrow [1]$, which are simply commutative diagrams of the form
    \[\begin{tikzcd}
	& {j_1} \\
	i && j \\
	& {j_1^\prime} \\
	{i^\prime} && {j^\prime}
	\arrow["l"', from=1-2, to=2-1]
	\arrow["r", from=1-2, to=2-3]
	\arrow["{l^\prime}"', from=3-2, to=4-1]
	\arrow["{r^\prime}", from=3-2, to=4-3]
	\arrow["{\rho_i}"', from=2-1, to=4-1]
	\arrow["{\rho_{j_1}}"', from=1-2, to=3-2]
	\arrow["{\rho_j}", from=2-3, to=4-3]
\end{tikzcd}\]
    Such a 2-cell has horizontal source $(l,r)$ and horizontal target $(l^\prime, r^\prime)$. 
\end{itemize}

Our second example of double category is a decorated version of $\bb{Z}\cc{J}$.

\begin{definition} [$F$-decoration]

Let $F : \cc{J} \rightarrow \cat$ be a diagram of small categories and $(\Bar{l}, \Bar{r}) : i \rightsquigarrow j$ be a zig-zag, say
    \[\begin{tikzcd}[column sep=scriptsize]
	& {j_1} && \dots && {j_n} \\
	{i = i_0} && {i_1} & \dots & {i_{n-1}} && {i_n = j}
	\arrow["{l_1}"{description}, from=1-2, to=2-1]
	\arrow["{r_1}"{description}, from=1-2, to=2-3]
	\arrow["{l_n}"{description}, from=1-6, to=2-5]
	\arrow["{r_n}"{description}, from=1-6, to=2-7]
\end{tikzcd}\]
An $F$-decoration of $(\Bar{l}, \Bar{r})$ consists of  sequence of objects $a_k \in \cc{C}_{j_k}$, $k = 1, \dots , n$, and a chain of morphisms 
    $$a \xrightarrow{f_0} \Tilde{l}_1(a_1) \ , \ \Tilde{r}_1(a_1) \xrightarrow{f_2} \Tilde{l}_2(a_2) \ , \dots , \ \Tilde{r}_n(a_n) \xrightarrow{f_n} b$$

    An $F$-decoration of the trivial zig-zag $\{i \}$ consists of a morphism $a \xrightarrow{f} b$ in $\cc{C}_i$.
\end{definition}

Let $DC = \bigoplus_{i \in \cc{J}} Ob(\cc{C}_i)$. $F$-decorated zig-zags provide a notion of morphism between elements of $DC$, as we  regard an $F$-decorated zig-zag as in the above definition to be an arrow 
\[\begin{tikzcd}
	{(i,a)} && {(j,b)}
	\arrow["{(\Bar{l}, \Bar{r}, \Bar{f})}", squiggly, from=1-1, to=1-3]
\end{tikzcd}\]

The composite $(i,a) \rightsquigarrow (j,b) \rightsquigarrow (k,c)$ is provided by concatenation of zig-zags in $\cc{J}$ and on decorations, by composing in $\cc{C}_j$ the last morphism in the first chain with the first morphisms in the second chain and then concatenating. The identity morphism on $(i,a)$ is provided by decorating the trivial zig-zag $\{i \}$ with the identity morphism $id_a$.

The double category $\bb{Z}\cc{J}$ acts on $F$-decorated zig-zags as follows. Given a 2-cell in $\bb{Z}\cc{J}$
\[\begin{tikzcd}
	i & j \\
	{i^\prime} & {j^\prime}
	\arrow["{\rho_i}"', from=1-1, to=2-1]
	\arrow["{\rho_j}", from=1-2, to=2-2]
	\arrow[""{name=0, anchor=center, inner sep=0}, "{(\Bar{l}, \Bar{r})}", squiggly, from=1-1, to=1-2]
	\arrow[""{name=1, anchor=center, inner sep=0}, "{(\Bar{l}^\prime, \Bar{r}^\prime)}"', squiggly, from=2-1, to=2-2]
	\arrow["\rho", shorten <=6pt, shorten >=6pt, Rightarrow, from=0, to=1]
\end{tikzcd}\]
and a decoration $(\Bar{l}, \Bar{r}, \Bar{f}) : (i,a) \rightsquigarrow (j,b)$, there is a decoration
\[\begin{tikzcd}
	{(i^\prime, a^\prime)} && {(j^\prime, b^\prime)}
	\arrow["{(\Bar{l}^\prime, \Bar{r}^\prime , \rho^* \Bar{f})}", squiggly, from=1-1, to=1-3]
\end{tikzcd}\]
where $a^\prime = \Tilde{\rho}_i(a)$ and $b^\prime = \Tilde{\rho}_j(b)$. The chain $\rho^* \Bar{f}$ is defined, roughly speaking, by pushing down the chain $\Bar{f}$ along the components of $\rho$ and then composing as prescribed by the map $\theta$ in $\D{}$ which indexes $\rho$. 

More precisely, the action of $\bb{Z} \cc{J}$ is defined on generating 2-cells as follows.
\begin{itemize}
    \item [(i)] In case the 2-cell $\rho$ is a commutative square,
    \[\begin{tikzcd}
	& {j_1} \\
	i && j \\
	& k
	\arrow["l"', from=1-2, to=2-1]
	\arrow["r", from=1-2, to=2-3]
	\arrow["{\rho_i}"', from=2-1, to=3-2]
	\arrow["{\rho_j}", from=2-3, to=3-2]
\end{tikzcd}\]
given a decoration $a \xrightarrow{f_0} \Tilde{l}(a_1) , \Tilde{r}(a_1) \xrightarrow{f_1} b$, we define $\rho^* \Bar{f}$ to be the decoration on the trivial zig-zag $\{ k\}$ provided by the composite
$$\rho^* \Bar{f} : a^\prime = \Tilde{\rho}_1(a) \xrightarrow{\Tilde{\rho}_if_0} \widetilde{\rho_il} (a_1) = \widetilde{\rho_jr}(a_1) \xrightarrow{\Tilde{\rho}_jf_1} \Tilde{\rho}_j(b) = b^\prime$$

\item[(ii)] In case $\rho$ is of the form 
\[\begin{tikzcd}
	& {j_1} \\
	i && j \\
	& {j_1^\prime} \\
	{i^\prime} && {j^\prime}
	\arrow["l"', from=1-2, to=2-1]
	\arrow["r", from=1-2, to=2-3]
	\arrow["{l^\prime}"', from=3-2, to=4-1]
	\arrow["{r^\prime}", from=3-2, to=4-3]
	\arrow["{\rho_i}"', from=2-1, to=4-1]
	\arrow["{\rho_{j_1}}"', from=1-2, to=3-2]
	\arrow["{\rho_j}", from=2-3, to=4-3]
\end{tikzcd}\]
we simply define $\rho^*(f_0, f_1) = (\Tilde{\rho}_i f_0, \Tilde{\rho}_j f_1)$
\end{itemize}
We extend the above action for general $\rho$ by concatenation in the horizontal direction. This is clearly functorial with respect to vertical composition of 2-cells in $\bb{Z} \cc{J}$, so that the term “action" is justified (see also the remark below). 

Lastly, we form a double category $\bb{Z}F$ with:
\begin{itemize}
    \item [$(\bullet)$] Set of objects $DC = \bigoplus_{i \in \cc{J}} Ob(\cc{C}_i)$.
    \item[$(\rightsquigarrow)$] Horizontal morphisms $F$-decorated zig-zags $(i,a) \rightsquigarrow (j,b)$.
    \item[$(\downarrow)$] Vertical morphisms $u^* : (i,a) \rightarrow (i^\prime , a^\prime)$ provided by morphisms $u : i \rightarrow i^\prime$ such that $\Tilde{u}(a) = a^\prime$.
    \item[$(\square)$] 2-cells
    \[\begin{tikzcd}
	{(i,a)} && {(j,b)} \\
	{(i^\prime, a^\prime)} && {(j^\prime, b^\prime)}
	\arrow[""{name=0, anchor=center, inner sep=0}, "{(\Bar{l}^\prime, \Bar{r}^\prime , \Bar{g})}"', squiggly, from=2-1, to=2-3]
	\arrow[""{name=1, anchor=center, inner sep=0}, "{(\Bar{l}, \Bar{r} , \Bar{f})}", squiggly, from=1-1, to=1-3]
	\arrow["{\rho_i^*}"', from=1-1, to=2-1]
	\arrow["{\rho_j^*}", from=1-3, to=2-3]
	\arrow["{\rho^*}", shorten <=4pt, shorten >=4pt, Rightarrow, from=1, to=0]
\end{tikzcd}\]
provided by 2-cells $\rho$ in $\bb{Z}\cc{J}$ such that $\rho^*\Bar{f} = \Bar{g}$.
\end{itemize}

\begin{note} [Functorial relations]
    At this point we can intuit how the double category $\bb{Z}F$ records the necessary relations for us to extract the colimit of $F$. For instance, its vertical category is precisely the category of elements $\cc{E}(ObF)$ of the restriction of $F$ on objects, and hence records the correct identifications on objects. The 2-cells record the identifications on morphisms  in a manner which is \textit{functorial} with respect to the identifications on objects.
\end{note}

\begin{remark} [Double fibrations]
    There is a forgetful double functor $\bb{Z} F \rightarrow \bb{Z} \cc{J}$. This functor is an example of a \textit{discrete double fibration}. The latter are simply category objects in the category of discrete fibrations and have been studied in \cite{lambert2021discrete}. In this vein, a 2-cell $\rho^*$ in $\bb{Z}F$ is the cocartesian lift of the corresponding 2-cell $\rho$ in $\bb{Z} \cc{J}$. Such fibrations correspond to double functors into a suitable double category of sets (see \cite{lambert2021discrete}). 
\end{remark}

We are ultimately interested in encoding relations via double categories. Just as in the case of relations encoded by categories, they suffer from a lack of symmetry. For a category $\cc{E}$, this is resolved by forming the category of zig-zags $\cc{Z}\cc{E}$. A similar construction is possible for double categories.

Let $\bb{D}$ be a double category. There is a double category $\cc{Z} \bb{D}$ with
$$(\cc{Z} \bb{D})_0 = \cc{Z} (\bb{D}_0) \ \  , \ \  (\cc{Z} \bb{D})_1 = \cc{Z} (\bb{D}_1)$$
In other words, 2-cells in $\cc{Z} \bb{D}$ are zig-zags (in the vertical direction) of 2-cells in $\bb{D}$. For instance, 
\[\begin{tikzcd}
	a & b \\
	{a^\prime} & {b^\prime} \\
	{a^{\prime\prime}} & {b^{\prime\prime}}
	\arrow[from=2-1, to=1-1]
	\arrow[from=2-2, to=1-2]
	\arrow[""{name=0, anchor=center, inner sep=0}, "\shortmid"{marking}, from=1-1, to=1-2]
	\arrow[""{name=1, anchor=center, inner sep=0}, "\shortmid"{marking}, from=2-1, to=2-2]
	\arrow[""{name=2, anchor=center, inner sep=0}, "\shortmid"{marking}, from=3-1, to=3-2]
	\arrow[from=2-1, to=3-1]
	\arrow[from=2-2, to=3-2]
	\arrow[shorten <=6pt, shorten >=6pt, Rightarrow, from=1, to=0]
	\arrow[shorten <=6pt, shorten >=6pt, Rightarrow, from=1, to=2]
\end{tikzcd}\]
In this double category, vertical composition in provided by concatenation, while horizontal composition is provided by composing in $\bb{D}$.

We are interested in the double categories
$$\Tilde{\bb{Z}} \cc{J} = \cc{Z}(\bb{Z} \cc{J}) \ \ , \ \ \Tilde{\bb{Z}} F = \cc{Z}(\bb{Z} F)$$
The 2-cells in these categories are depicted as
\[\begin{tikzcd}
	i & j \\
	{i^\prime} & {j^\prime}
	\arrow[""{name=0, anchor=center, inner sep=0}, squiggly, from=1-1, to=1-2]
	\arrow[""{name=1, anchor=center, inner sep=0}, squiggly, from=2-1, to=2-2]
	\arrow[squiggly, from=1-1, to=2-1]
	\arrow[squiggly, from=1-2, to=2-2]
	\arrow[shorten <=6pt, shorten >=6pt, Rightarrow, squiggly, from=0, to=1]
\end{tikzcd} \ \ , \ \ 
\begin{tikzcd}
	{(i,a)} & {(j,b)} \\
	{(i^\prime, a^\prime)} & {(j^\prime, b^\prime)}
	\arrow[""{name=0, anchor=center, inner sep=0}, squiggly, from=1-1, to=1-2]
	\arrow[""{name=1, anchor=center, inner sep=0}, squiggly, from=2-1, to=2-2]
	\arrow[squiggly, from=1-1, to=2-1]
	\arrow[squiggly, from=1-2, to=2-2]
	\arrow[shorten <=6pt, shorten >=6pt, Rightarrow, squiggly, from=0, to=1]
\end{tikzcd}
\]

In $\Tilde{\bb{Z}} \cc{J}$, both horizontal and vertical morphisms are zig-zags in $\cc{J}$. In $\Tilde{\bb{Z}} F$, the horizontal morphisms are $F$-decorated zig-zags, while the vertical morphisms are zig-zags in $\cc{E}(ObF)$. We regard the latter as $F$-decorated by identities. These double categories are well-behaved and have pleasant properties which we discuss in Appendix \ref{app_nice}.

\begin{remark}
    We still have a forgetful functor $\Tilde{\bb{Z}} F \rightarrow \Tilde{\bb{Z}} \cc{J}$, but this functor fails to be a fibration, so we cannot speak of an action.
\end{remark}

\subsection{A model for colimits} \label{sec_colimthm}

For a diagram of categories $$F : \cc{J} \rightarrow \cat$$
we construct an explicit category $\cc{C}$ and demonstrate that it serves as a colimit for $F$. 

\begin{itemize}
    \item[$(\bullet)$] 
    We define 
    $$Ob(\cc{C}) = \colim_{i \in \cc{J}} Ob(\cc{C}_i)$$
    Hence, objects are represented by pairs $(i,a) \in DC$ subject to the relation provided by morphisms in the category of elements $\cc{E}(ObF)$.

     \item [$(\rightarrow)$] 
    Let $[a], [b] \in Ob(\cc{C})$. A morphism $[a] \rightarrow [b]$ in $\cc{C}$ consists of an $F$-decorated zig-zag 
    \[\begin{tikzcd}
	{(i,a)} && {(j,b)}
	\arrow["{(\Bar{l}, \Bar{r} , \Bar{f})}", squiggly, from=1-1, to=1-3]
\end{tikzcd}\]
    where $(i,a)$ and $(j,b)$ are representatives of $[a]$ and $[b]$, subject to the relation determined by 2-cells in the double category $\bb{Z}F$. In other words, in case there is a 2-cell
    \[\begin{tikzcd}
	{(i,a)} && {(j,b)} \\
	{(i^\prime, a^\prime)} && {(j^\prime, b^\prime)}
	\arrow[""{name=0, anchor=center, inner sep=0}, "{(\Bar{l}^\prime, \Bar{r}^\prime , \Bar{g})}"', squiggly, from=2-1, to=2-3]
	\arrow["{\rho_j^*}", from=1-3, to=2-3]
	\arrow[""{name=1, anchor=center, inner sep=0}, "{(\Bar{l}, \Bar{r} , \Bar{f})}", squiggly, from=1-1, to=1-3]
	\arrow["{\rho_i^*}"', from=1-1, to=2-1]
	\arrow["{\rho^*}", shorten <=4pt, shorten >=4pt, Rightarrow, from=1, to=0]
\end{tikzcd}\]
the top and bottom zig-zags represent the same morphism $[a] \rightarrow [b]$.

\item [$(\circ)$] Let $[a] \rightarrow [b]$ and $[b] \rightarrow [c]$ be composable morphisms in $\cc{C}$. Assume these morphisms are represented by $F$-decorated zig-zags $(i,a) \rightsquigarrow (j,b)$ and $(j^\prime, b^\prime) \rightsquigarrow (k, c)$. Choose an $ObF$-decorated zig-zag $(j,b) \rightsquigarrow (j^\prime , b^\prime)$ (we know there exists at least one such zig-zag), which we regard as $F$-decorated by identity morphisms.
   
    We define the composite $[a] \rightarrow [c]$ to be represented by the composite of $F$-decorated zig-zags
    \[\begin{tikzcd}
	{(i,a)} & {(j,b)} \\
	& {(j^\prime, b^\prime)} & {(c,k)}
	\arrow[squiggly, from=1-1, to=1-2]
	\arrow[squiggly, from=1-2, to=2-2]
	\arrow[squiggly, from=2-2, to=2-3]
\end{tikzcd}\]

\item[$(=)$] For $[a] \in \cc{C}$, we choose a representative $(i,a)$ and define the identity morphism $id_{[a]}$ to be represented by $(\{ i\}, id_a)$. 

\end{itemize}

\begin{proposition} \label{prop_good}
    As defined above, the category $\cc{C}$ is well-defined.
\end{proposition}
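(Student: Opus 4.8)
The plan is to verify the three things that turn the data above into a genuine category: that the hom-sets are well-defined quotients, that composition is independent of every choice made in its definition, and that composition is associative and unital.

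First I would pin down the equivalence relation on morphisms. The relation ``represent the same morphism $[a] \rightarrow [b]$'' is generated by those 2-cells of $\bb{Z}F$ whose vertical legs $\rho_i^*, \rho_j^*$ come from $\cc{E}(ObF)$, and so relate only representatives of the \emph{same} objects $[a]$ and $[b]$. Reflexivity is witnessed by identity 2-cells and transitivity by vertical composition of 2-cells, while symmetry is obtained by passing to the equivalence relation generated; this generated relation is exactly the one carried by $\Tilde{\bb{Z}} F = \cc{Z}(\bb{Z}F)$, whose 2-cells are vertical zig-zags of 2-cells of $\bb{Z}F$. With this understood, each $\cc{C}([a],[b])$ is a well-defined set.

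Second, and this is the heart of the matter, I would show composition is well-defined. Recall it was defined by choosing an $ObF$-decorated zig-zag $(j,b) \rightsquigarrow (j^\prime, b^\prime)$ linking the target representative of the first morphism to the source representative of the second and then concatenating; such a zig-zag exists because $(j,b)$ and $(j^\prime,b^\prime)$ lie in the same connected component of $\cc{E}(ObF)$. Two choices must be discharged: the chosen $F$-decorated zig-zags representing the two morphisms, and the connecting identity-decorated zig-zag. For the former, replacing a representative by an equivalent one supplies a mediating 2-cell of $\Tilde{\bb{Z}} F$, which I horizontally compose (whisker) with the identity 2-cell on the remaining part of the chain to produce a 2-cell between the two composites, leaving its class unchanged. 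For the latter I would isolate the key lemma: concatenating an $ObF$-decorated (that is, identity-decorated) zig-zag onto a morphism does not alter its equivalence class, and the resulting class does not depend on which such zig-zag is used. Decomposing the connector horizontally into generating roofs, this reduces to the collapsing 2-cells of type (i): a commutative-square 2-cell sends an identity-decorated roof to a trivial zig-zag decorated by an identity, so, together with the vertical endpoint-sliding morphisms $\rho_i^*$, one absorbs the connector roof by roof. Any two connecting zig-zags therefore yield equivalent composites.

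Third I would treat associativity and the unit laws. Concatenation of zig-zags is strictly associative and the only genuine compositions occur inside the honest categories $\cc{C}_j$, so the two bracketings of a triple composite differ only by the inserted connecting zig-zags, which the absorption lemma has already rendered immaterial; hence they name the same morphism. For units, the trivial zig-zag at $i$ decorated by $id_a$ composes with $(\Bar{l}, \Bar{r}, \Bar{f})$ by composing $id_a$ with $f_0$ in $\cc{C}_i$, returning the original decorated zig-zag, and symmetrically on the right, while independence of the chosen representative $(i,a)$ of $[a]$ is another instance of the absorption lemma. The main obstacle I anticipate is precisely the independence from the connecting $ObF$-decorated zig-zag, since that is where the directed 2-cells of $\bb{Z}F$ must be shown to trivialize identity-decorated zig-zags; I would therefore state and prove the absorption lemma first and deduce well-definedness of composition, associativity, and unitality from it, the rest being bookkeeping.
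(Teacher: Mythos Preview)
Your overall strategy matches the paper's: isolate an ``absorption lemma'' (concatenating an $ObF$-decorated zig-zag does not change the class of a morphism) and derive well-definedness of identities, composition, associativity and unitality from it. This is exactly the paper's Lemma~\ref{lemma_1}, and your roof-by-roof collapse via type~(i) cells is the right mechanism.

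There is, however, a genuine gap in your treatment of well-definedness of composition. You propose to replace the representative $(i,a)\rightsquigarrow (j,b)$ by an equivalent one, obtain a mediating 2-cell in $\Tilde{\bb{Z}}F$, and then ``horizontally compose (whisker) with the identity 2-cell on the remaining part of the chain''. This horizontal composite does not typecheck: the mediating 2-cell has a nontrivial right vertical side (some $ObF$-decorated zig-zag $(j,b)\rightsquigarrow (j'',b'')$), while the identity 2-cell on the rest of the chain has the trivial vertical arrow as its left side. In a double category these cannot be pasted.

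The paper repairs exactly this point with an extra ingredient you have not invoked: the \emph{transposition} 2-cells of $\Tilde{\bb{Z}}F$ (Appendix~\ref{app_nice}), which slide a vertical $ObF$-decorated zig-zag into the horizontal direction. Using them, any 2-cell can be transposed to one whose vertical sides are trivial; the absorption lemma then disposes of the horizontally-inserted identity zig-zag. This is packaged as Lemma~\ref{lemma_2}: given two 2-cells whose shared middle vertical sides need not agree, one can always manufacture a single 2-cell between the concatenated tops and bottoms. With that lemma in hand, your argument goes through; without it, the whiskering step fails as written. I would recommend you state and prove the transposition manoeuvre (or the composability lemma it yields) alongside your absorption lemma, and then the rest of your outline is fine.
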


The proof of this proposition relies on moving zig-zags around. We present it systematically in Appendix \ref{app_prop}.

\begin{theorem} \label{thm_colim}
    There is an isomorphism
    $$\cc{C} \cong \colim_{i \in \cc{J}} \cc{C}_i$$
\end{theorem}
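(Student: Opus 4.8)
The plan is to verify the universal property of the colimit directly, using the explicit category $\cc{C}$. First I would exhibit the colimit cocone. For each $i \in \cc{J}$ define a functor $\iota_i : \cc{C}_i \to \cc{C}$ sending an object $a$ to its class $[a]$ and a morphism $f : a \to b$ to the class of the trivial zig-zag $\{i\}$ decorated by $f$. Functoriality of $\iota_i$ (which uses the well-definedness of $\cc{C}$, Proposition \ref{prop_good}) and the cocone compatibility $\iota_j \circ \tilde u = \iota_i$ for $u : i \to j$ both follow immediately from the construction: on objects, $(i,a)$ and $(j,\tilde u(a))$ name the same class in $\colim_i Ob(\cc{C}_i)$, while on morphisms the identification of $\{i\}$ decorated by $f$ with $\{j\}$ decorated by $\tilde u(f)$ is witnessed by the evident 2-cell of $\bb{Z} F$ lying over the vertical arrow $u$.

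Next I would build, for an arbitrary cocone $(\cc{D}, \phi_i : \cc{C}_i \to \cc{D})$, the comparison functor $\psi : \cc{C} \to \cc{D}$. On objects, set $\psi[a] = \phi_i(a)$; this is well defined by the universal property of $Ob(\cc{C}) = \colim_i Ob(\cc{C}_i)$ in $\set$ together with the cocone relations $\phi_j \tilde u = \phi_i$. On morphisms, given a representing $F$-decorated zig-zag $a \xrightarrow{f_0} \tilde l_1(a_1), \dots, \tilde r_n(a_n) \xrightarrow{f_n} b$, set $\psi$ of its class to be the composite $\phi_{i_n}(f_n) \circ \cdots \circ \phi_{i_0}(f_0)$ in $\cc{D}$. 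The cocone relations guarantee that consecutive factors are composable, since $\phi_{i_{k-1}}\tilde l_k = \phi_{j_k} = \phi_{i_k}\tilde r_k$ identifies the target of one factor with the source of the next at the common object $\phi_{j_k}(a_k)$.

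The crucial step is to show that this assignment descends to morphism classes of $\cc{C}$, i.e.\ that it is invariant under the equivalence relation generated by the 2-cells of $\bb{Z} F$. Since a function constant on a generating relation is constant on the equivalence relation it generates, and since every 2-cell of $\bb{Z} F$ is a horizontal concatenation of the two generating types (by the lemma identifying $\cat_{*,*}(\cc{Z}^n, \cc{Z}^m)$ with order preserving surjections), it suffices to check invariance on the generating cells (i) and (ii) and compatibility with concatenation. For both generating cells the check is a one-line computation collapsing $\phi_k \tilde\rho_i = \phi_i$ and $\phi_k \tilde\rho_j = \phi_j$ (respectively $\phi_{i'} \tilde\rho_i = \phi_i$ and $\phi_{j'} \tilde\rho_j = \phi_j$); compatibility with concatenation holds because $\psi$ of a concatenated decorated zig-zag is visibly the composite in $\cc{D}$ of the two pieces, the shared middle morphisms being composed inside a single $\cc{C}_{j}$ and then split apart by functoriality of $\phi_j$. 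I expect this invariance to be the main obstacle, in the sense that it is the only place where the double-categorical relation must genuinely be unwound; once reduced to the generators it is routine.

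Finally I would check that $\psi$ is a functor (identities are preserved since $\psi$ of $\{i\}$ decorated by $id_a$ is $\phi_i(id_a) = id$, and composition is preserved because $\psi$ respects concatenation and sends the identity-decorated connecting zig-zags used in the definition of composition in $\cc{C}$ to identities of $\cc{D}$), that $\psi \circ \iota_i = \phi_i$ (immediate on objects and on $\{i\}$-decorations), and that $\psi$ is unique. Uniqueness follows from the observation that every morphism of $\cc{C}$ is a composite of images $\iota_{i_k}(f_k)$: unwinding the definition of composition in $\cc{C}$ shows that the class of the decorated zig-zag above equals $\iota_{i_n}(f_n) \circ \cdots \circ \iota_{i_0}(f_0)$, the interposed roofs decorated by identities being absorbed upon concatenation. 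Hence any cocone-preserving functor out of $\cc{C}$ is forced to agree with $\psi$ on all morphisms, completing the verification of the universal property and establishing the isomorphism $\cc{C} \cong \colim_i \cc{C}_i$.
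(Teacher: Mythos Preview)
Your proposal is correct and follows essentially the same approach as the paper: exhibit the canonical cocone via trivially decorated zig-zags, build the comparison functor to an arbitrary cocone by composing the images of the chain, and verify that this descends modulo the 2-cells of $\bb{Z}F$. Your version is in fact slightly more explicit than the paper's---you reduce the invariance check to the two generating cell types and you argue uniqueness separately---whereas the paper handles invariance in one line by appealing to the fact that a 2-cell is a natural transformation of zig-zags and leaves uniqueness implicit.
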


\begin{proof}
    We demonstrate that $\cc{C}$ has the structure of a cocone over $F$ by constructing a natural transformation $\phi : F \Rightarrow \cc{C}$ and then verify that this cocone is universal. 

    For $i \in \cc{J}$, let the functor $$\phi_i : \cc{C}_i \rightarrow \cc{C}$$ be defined by $\phi_i(a) = [a]$ on objects. For a morphism $f: a \rightarrow b$, we let $\phi_i(f)$ be the morphism in $\cc{C}$ represented by the decoration $(\{ i\}, f) : (i,a) \rightsquigarrow (i,b) $ on the trivial zig-zag $\{i \}$ by the morphism $f$.

    For a morphism $u : i \rightarrow j$ in $\cc{J}$, the triangle 
    \[\begin{tikzcd}
	{\mathcal{C}_i} && {\mathcal{C}_j} \\
	& {\mathcal{C}}
	\arrow["{\phi_i}"', from=1-1, to=2-2]
	\arrow["{\phi_j}", from=1-3, to=2-2]
	\arrow["{\tilde{u}}", from=1-1, to=1-3]
\end{tikzcd}\]
    commutes. This is clear on objects. For a morphism $f$ in $\cc{C}_i$, the morphism $u$, regarded an identity 2-cell in $\bb{Z}\cc{J}$, induces a 2-cell $u^* : (\{ i\}, f) \rightarrow (\{ j\}, \Tilde{u}(f))$ in $\bb{Z}F$ which identifies its source and target in $\cc{C}$.

    To verify the universal property, let $\cc{D}$ be a category with the structure of a cocone $\psi : F \Rightarrow \cc{D}$. We construct a functor $H : \cc{C} \rightarrow \cc{D}$ which makes the following triangle commute
    \[\begin{tikzcd}
	F && {\mathcal{C}} \\
	& {\mathcal{D}}
	\arrow["\psi"', Rightarrow, from=1-1, to=2-2]
	\arrow["H", dotted, from=1-3, to=2-2]
	\arrow["\phi", Rightarrow, from=1-1, to=1-3]
\end{tikzcd}\]
On objects, given $[a] \in \cc{C}$, we choose a representative $(i,a)$ and define 
$$H([a]) = \psi_i(a)$$
This is well-defined, since any two representatives are connected by an $ObF$-decorated zig-zag which maps under $\psi$ to a chain of identity morphisms in $\cc{D}$.

Let $[a] \rightarrow [b]$ be a morphism in $\cc{C}$. For a representative $(\Bar{l}, \Bar{r}, \Bar{f}) : (i,a) \rightsquigarrow (j,b)$, we obtain a chain of composable morphisms $\psi(\Bar{f})$ in $\cc{D}$ (in virtue of $\psi$ being natural). We define
$$H(\Bar{l}, \Bar{r}, \Bar{f}) = \circ (\psi(\Bar{f}))$$
where the expression on the right denoted the composition of the chain. 

This mapping is well-defined. Indeed, assume we have a 2-cell in $\bb{Z}F$
\[\begin{tikzcd}
	{(i,a)} && {(j,b)} \\
	{(i^\prime, a^\prime)} && {(j^\prime, b^\prime)}
	\arrow[""{name=0, anchor=center, inner sep=0}, "{(\Bar{l}^\prime, \Bar{r}^\prime , \Bar{g})}"', squiggly, from=2-1, to=2-3]
	\arrow[""{name=1, anchor=center, inner sep=0}, "{(\Bar{l}, \Bar{r} , \Bar{f})}", squiggly, from=1-1, to=1-3]
	\arrow["{\rho_i^*}"', from=1-1, to=2-1]
	\arrow["{\rho_j^*}", from=1-3, to=2-3]
	\arrow["{\rho^*}", shorten <=4pt, shorten >=4pt, Rightarrow, from=1, to=0]
\end{tikzcd}\]
In virtue of this cell being indexed by a cell $\rho : (\Bar{l}, \Bar{r}) \Rightarrow (\Bar{l}^\prime, \Bar{r}^\prime)$ in $\bb{Z} \cc{J}$, which is a natural transformation of zig-zags in $\cc{J}$, and the fact that $\Bar{g} = \rho^*\Bar{f}$, we must have $\circ(\psi(\Bar{f})) = \circ(\psi(\Bar{g}))$.

It is clear that $H$ preserves composition and that $H\phi = \psi$. Hence the universality of $\cc{C}$.
\end{proof}

\begin{note} [The double category of elements]
    In the previous section we mentioned how the colimit of a diagram of a sets $F : \cc{J} \rightarrow \set$ is isomorphic to the set of connected components $\pi_0 \cc{E}(F)$ of the category of elements of $F$. But, we did not follow up in categorifying this idea, even though we did so for the categories of zig-zags introduced in the same section. 

    One way to achieve such a categorification is by introducing the \textit{double category of elements} $\bb{E}(F)$ associated to a diagram of categories $F : \cc{J} \rightarrow \cat$. This category is defined with:
    \begin{itemize}
    \item [$(\bullet)$] Set of objects $DC = \bigoplus_{i \in \cc{J}} Ob(\cc{C}_i)$.

    \item[$(\downarrow)$] Vertical category the direct sum $\bigoplus_{i \in \cc{J}} \cc{C}_i$.

    \item[$(\mapsto)$] A horizontal morphism $u^* : (i, a) \mapsto (j,a^\prime )$ being a morphism in the category of elements $\cc{E}(ObF)$, i.e. a morphism $u : i \rightarrow j$ in $\cc{J}$ such that $a \mapsto a^\prime$ under $\Tilde{u}$. 

    \item[$(\square)$]  2-cells 
    \[\begin{tikzcd}
	{(i,a)} & {(j, a^\prime)} \\
	{(i,b)} & {(j, b^\prime)}
	\arrow["f"', from=1-1, to=2-1]
	\arrow[""{name=0, anchor=center, inner sep=0}, maps to, from=1-1, to=1-2]
	\arrow[""{name=1, anchor=center, inner sep=0}, maps to, from=2-1, to=2-2]
	\arrow["g", from=1-2, to=2-2]
	\arrow["{u^*}"{description}, draw=none, from=0, to=1]
\end{tikzcd}\]
in case of a morphism $u : i \rightarrow j$ in $\cc{J}$ such that $\Tilde{u}(f) = g$.
\end{itemize}

If we consider the category of elements as an attempt to depict a diagram of sets, the double category of elements displayed above is an attempt to depict a diagram of categories. 

In general, for a double category $\bb{D}$ we may define the \textit{vertical category of connected components} $\pi_0^v \bb{D}$. The functor 
$$\pi_0^v : \dcat \rightarrow \cat$$
from double categories to categories can be defined to be the left adjoint to the functor $\bb{V} : \cat \rightarrow \dcat$ which regards a category $\cc{A}$ as a double category $\bb{V} \cc{A}$ with $\cc{A}$ in the vertical direction and the horizontal direction trivial. 

For general double categories $\bb{D}$, it is certainly tedious to describe $\pi_0^v \bb{D}$ explicitly, but we can understand from the universal  property that any 2-cell
\[\begin{tikzcd}
	a & b \\
	c & d
	\arrow["f"', from=1-1, to=2-1]
	\arrow[""{name=0, anchor=center, inner sep=0}, "\shortmid"{marking}, from=1-1, to=1-2]
	\arrow[""{name=1, anchor=center, inner sep=0}, "\shortmid"{marking}, from=2-1, to=2-2]
	\arrow["g", from=1-2, to=2-2]
	\arrow[shorten <=4pt, shorten >=4pt, Rightarrow, from=0, to=1]
\end{tikzcd}\]
provides an identification $a \sim b$, $c \sim d$ and $f \sim g$.
For this reason, we understand that 
$$\colim F \cong \pi_0^v \bb{E}(F)$$

In some sense, the double category $\bb{Z}F$ is a way of symmetrizing $\bb{E}(F)$ in order to have an explicit colimit.

\end{note}

\section{The necklace theorem}

Let $X$ be a simplicial set. 
We adopt the following conventions:
\begin{itemize}
    \item[-] For an $n$-simplex $x \in X_n$, let $\D{n}(x)$ denote the copy of $\D{n}$ indexed by the simplex $x$.

    \item [-] $V(x)$ denotes the set of vertices of the simplex $\D{n}(x)$. We regard it as an ordered set.
    
    \item[-] For vertices $a, b \in V(x)$ we denote by $x_{\ra{ab}}$ the subsimplex of $x$ spanned by vertices “between" $a$ and $b$. 
\end{itemize}

Let $\cc{S}(X)$ denote the category of simplices of $X$ (as defined in the introduction). There is a forgetful functor
\[
\begin{matrix}
     \spx & \rightarrow & \sset \\
     x & \mapsto & \D{n}(x)
\end{matrix}
\]
and by general presheaf theory we have
$$X \cong \colim_{x \in X_n} \D{n}(x)$$

In a similar fashion we denote  $\bb{\D{n}}(x)$ the copy of the simplicial category $\bb{\D{n}}$ indexed by the simplex $x \in X_n$. As mentioned in the introduction, for a fixed $[p] \in \D{}$, the category of $\bb{\D{n}}(x)_p$ is defined with:
\begin{itemize}
    \item [$(\bullet)$] Set of objects $V(x)$.
    \item[$(\triangle)$] Given $a , b \in V(x)$, a $p$-arrow $\ra{U} : a \rightarrow b$ is a flag $ U^0 \subseteq \dots \subseteq U^p$ of subsets of $V(x_{\ra{ab}})$ such that $a,b \in U^0$.
    \item[$(\circ)$] Composition is provided by union of sets. 
\end{itemize}

Fix some $[p] \in \D{}$. The category of $p$-arrows $\ff{C}(X)_p$ is the colimit of the diagram of categories
\[
\begin{matrix}
    \chi_p : & \spx & \rightarrow & \cat \\
    & x & \mapsto & \bb{\D{n}}(x)_p
\end{matrix}
\]
Therefore, the general theory developed in the previous section to describe $\cat$-valued colimits applies. We interpret in the language of simplices.

First of all, on objects, we have 
$$Ob(\ff{C}(X)_p) = X_0$$
In light of our notation, an object $a \in X_0$ is represented by a pair $(x, a)$ where $x \in X_n$ and $a \in V(x)$.  

The bookkeeping of indices for the above colimit is carried out by the double category $\bb{Z}\spx$, which we interpret as follows. Consider a zig-zag $T : [n] \rightsquigarrow [m]$ in $\D{}$, say
\[\begin{tikzcd}
	& {[m_1]} && \dots && {[m_k]} \\
	{[n]} && {[n_1]} & \dots & {[n_{k-1}]} && {[m]}
	\arrow["{l_1}"', from=1-2, to=2-1]
	\arrow["{r_1}", from=1-2, to=2-3]
	\arrow["{l_k}"', from=1-6, to=2-5]
	\arrow["{r_k}", from=1-6, to=2-7]
\end{tikzcd}\]
Being a diagram in $\D{}$, this zig-zag can be realized as a simplicial set
$$|T| = \D{n} +_{\D{m_1}} \dots +_{\D{m_{k-1}}} \D{m}$$
Moreover, any 2-cell in $\bb{Z}\D{}$ 
\[\begin{tikzcd}
	{[n]} & {[m]} \\
	{[n^\prime]} & {[m^\prime]}
	\arrow[""{name=0, anchor=center, inner sep=0}, "T", squiggly, from=1-1, to=1-2]
	\arrow[""{name=1, anchor=center, inner sep=0}, "S"', squiggly, from=2-1, to=2-2]
	\arrow[from=1-1, to=2-1]
	\arrow[from=1-2, to=2-2]
	\arrow["\rho", shorten <=4pt, shorten >=4pt, Rightarrow, from=0, to=1]
\end{tikzcd}\]
is represented by a morphism of simplicial sets $\rho : |T| \rightarrow |S|$ (beware though, there are more simplicial maps $|T| \rightarrow |S|$ than 2-cells as above). 

In this vein, a zig-zag in $\spx$ is a map of simplicial sets $|T| \rightarrow X$ for some zig-zag $T$ in $\D{}$, and a 2-cell in $\bb{Z} \spx$ 
\[\begin{tikzcd}
	x && y \\
	{x^\prime} && {x^\prime}
	\arrow[""{name=0, anchor=center, inner sep=0}, "{|T| \rightarrow X}", squiggly, from=1-1, to=1-3]
	\arrow[""{name=1, anchor=center, inner sep=0}, "{|S| \rightarrow X}"', squiggly, from=2-1, to=2-3]
	\arrow[from=1-1, to=2-1]
	\arrow[from=1-3, to=2-3]
	\arrow["\rho", shorten <=4pt, shorten >=4pt, Rightarrow, from=0, to=1]
\end{tikzcd}\]
is represented by a simplicial map over $X$
\[\begin{tikzcd}
	{|T|} && {|S|} \\
	& X
	\arrow["\rho", from=1-1, to=1-3]
	\arrow[from=1-3, to=2-2]
	\arrow[from=1-1, to=2-2]
\end{tikzcd}\]
Again, there are more such commutative triangles than 2-cells as above.

Let $(|T| \rightarrow X) : x \rightsquigarrow y$ be a zig-zag in $\cc{Z}\spx$, say
\[\begin{tikzcd}
	& {y_0} && \dots && {y_k} \\
	{x = x_0} && {x_1} & \dots & {x_{k-1}} && {x_k = y}
	\arrow["{l_1}"', from=1-2, to=2-1]
	\arrow["{r_1}", from=1-2, to=2-3]
	\arrow["{l_k}"', from=1-6, to=2-5]
	\arrow["{r_k}", from=1-6, to=2-7]
\end{tikzcd}\]
A $\chi_p$-decoration of such a zig-zag consists of a choice of vertices $a \in V(x)$, $a_i \in V(y_i)$ and $b \in V(y)$ together with a chain  $\ra{U}_0 : a \rightarrow a_1, \dots , \ra{U}_k : a_k \rightarrow b$, where $\ra{U}_i$ is a morphism in $\bb{\D{n_i}}(x_i)_p$.

Observe that, in constructing a decoration, we do not make use of the simplices $y_i$ except in choosing the vertices $a_i$. Moreover, in constructing a morphism $\ra{U}_i : a_i \rightarrow a_{i+1}$, we only make use of the subsimplex of $x_i$ spanned by the vertices between $a_i$ and $a_{i+1}$. In some sense, we see the notion of \textit{necklace} originating from this observation. 

\begin{definition}
\begin{itemize}
    \item[-] (Necklace) A necklace $N$ is a zig-zag in $\D{}$ of the form
    \[\begin{tikzcd}
	& {[0]} && \dots && {[0]} \\
	{[n]} && {[n_1]} & \dots & {[n_{k-1}]} && {[m]}
	\arrow["{\omega_1}"', from=1-2, to=2-1]
	\arrow["{\alpha_1}", from=1-2, to=2-3]
	\arrow["{\omega_k}"', from=1-6, to=2-5]
	\arrow["{\alpha_k}", from=1-6, to=2-7]
\end{tikzcd}\]
    where the maps $\alpha$ and $\omega$ indicate the initial and terminal vertex inclusions.

    \item [-] (Necklace in $X$).  A necklace in $X$ is a zig-zag in $\spx$ of the form $|N| \rightarrow X$ for some necklace $N$ in $\cc{Z}\D{}$, say
    \[\begin{tikzcd}
	& {a_1} && \dots && {a_k} \\
	x && {x_1} & \dots & {x_{k-1}} && y
	\arrow["{\omega_1}"', from=1-2, to=2-1]
	\arrow["{\alpha_1}", from=1-2, to=2-3]
	\arrow["{\omega_k}"', from=1-6, to=2-5]
	\arrow["{\alpha_k}", from=1-6, to=2-7]
\end{tikzcd}\]

\item[-] (Proper decoration). A $\chi_p$-decorated zig-zag $(x,a) \rightsquigarrow (y,b)$ is called proper if $a$ is the initial vertex of $x$ and $b$ the terminal vertex of $y$.  
\end{itemize}   
\end{definition}

The realization of a necklace $N : [n] \rightsquigarrow [m]$ in $\D{}$ may be written as
$$|N| = \D{n} \vee \dots \vee \D{m}$$
Hence, we recover the definition of necklace given by Dugger and Spivak in \cite{dugger2011rigidification}. 
We may define beads, joins and vertices as in the introduction. 
 The novelty in our treatment is only in interpreting necklaces as zig-zags, which allows us to use the double categorical framework developed earlier.

Let $\bb{N}_p X$ denote the full sub-double category of $\bb{Z}\chi_p$ in which the horizontal morphism are properly decorated necklaces. The following proposition is a combinatorial unpacking of the structure of $\bb{N}_p X$ in the language of simplicial sets.

\begin{proposition} \label{prop_translate}
    \begin{itemize}
        \item [(i)] Let $N$ and $M$ be two necklaces in $\D{}$, say $|N| = \D{n_0} \vee \dots \vee \D{n_k}$ and $|M| = \D{m_0} \vee \dots \vee\D{m_l}$.
        
        There is a bijection between the set of 2-cells  $(\bb{Z} \D{})_1 (N, M)$ and 
        the set of simplicial maps $ |N| \rightarrow |M| $ which restrict to maps $\D{n_0} \rightarrow \D{m_0}$ and $\D{n_k} \rightarrow \D{m_l}$. 

        \item[(ii)] Let $|N| \rightarrow X$ be a necklace in $X$. There is a bijection between  proper $\chi_p$-decorations of this necklace (when regarded as a zig-zag in $\spx$) and flags $\ra{U} = ( U^0 \subseteq \dots \subseteq U^p )$ of subsets of $V_N$ such that $J_N \subseteq U^0$.

        \item[(iii)] There is a bijection between 2-cells in $\bb{N}_p X$ 
        \[\begin{tikzcd}
	{(a,x)} && {(a,x)} \\
	{(a,x^\prime)} && {(a,x^\prime)}
	\arrow[""{name=0, anchor=center, inner sep=0}, "{(|N| \rightarrow X, \ra{U})}", squiggly, from=1-1, to=1-3]
	\arrow[""{name=1, anchor=center, inner sep=0}, "{(|M| \rightarrow X, \ra{V})}"', squiggly, from=2-1, to=2-3]
	\arrow[from=1-1, to=2-1]
	\arrow[from=1-3, to=2-3]
	\arrow["{\rho^*}", shorten <=4pt, shorten >=4pt, Rightarrow, from=0, to=1]
\end{tikzcd}\]
and commutative triangles in $\sset_{*,*}$
\[\begin{tikzcd}
	{|N|} && {|M|} \\
	& {X_{a,b}}
	\arrow["\rho", from=1-1, to=1-3]
	\arrow[from=1-1, to=2-2]
	\arrow[from=1-3, to=2-2]
\end{tikzcd}\]
such that $\rho(\ra{U}) = \ra{V}$.

    \end{itemize}
\end{proposition}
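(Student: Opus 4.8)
The plan is to prove the three parts in order, using (i) and (ii) to set up a geometric and a combinatorial dictionary that (iii) then merely assembles. For (i), I would start from the description already recorded in this section: a $2$-cell in $\bb{Z}\D{}$ is a pair $(\theta,\rho)$ consisting of an endpoint-preserving functor $\theta\colon\cc{Z}^k\to\cc{Z}^l$ (equivalently, by the Lemma, a surjection $[k]\to[l]$) together with a natural transformation $\rho\colon N\Rightarrow M\theta$ of diagrams in $\D{}$, and realization sends such a pair to a simplicial map $|N|\to|M|$. First I would show this assignment is injective, by recovering $\theta$ from the way $\rho$ sends the joins of $N$ to vertices of $M$ and recovering each bead-component of $\rho$ as the restriction of the realized map to that bead. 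That every realized map restricts to maps on the end beads is then immediate, since $\theta$ preserves endpoints. The substantive direction is the converse: given $f\colon|N|\to|M|$ restricting appropriately on the end beads, I would use the fact that every nondegenerate simplex of the wedge $|M|$ lies in a single bead to conclude that each bead $\D{n_i}$ of $N$ is carried by $f$ into a single bead $\D{m_{\theta(i)}}$. Because consecutive beads of $N$ share a join, the index assignment $\theta$ is forced to be monotone, and the end-bead hypothesis forces its image to be the whole interval $[0,l]$, so $\theta$ is a surjection; the bead-restrictions of $f$ then assemble into a natural transformation $\rho$, reconstructing a unique $2$-cell.

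For (ii), the key point is that properness rigidifies the connecting vertices. In a properly decorated necklace the top objects of the zig-zag are the joins themselves, so the decorating chain consists of exactly one flag $\ra{U}_i$ per bead $\D{n_i}$, namely a flag of subsets of the vertices of that bead whose bottom set contains the two joins bounding it. I would then define the bijection by restriction and union: a global flag $\ra{U}$ on $V_N$ with $J_N\subseteq U^0$ yields per-bead flags $U^t_i=U^t\cap V(\text{bead}_i)$, and conversely a family of per-bead flags is glued by $U^t=\bigcup_i U^t_i$. These are mutually inverse because $V_N$ is the union of the beads glued along $J_N$, and the condition $J_N\subseteq U^0$ guarantees simultaneously the endpoint constraints on each $\ra{U}_i$ and the consistency of the two descriptions at the shared join vertices.

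For (iii), I would compose the two dictionaries. A $2$-cell in $\bb{N}_p X$ is, by definition of $\bb{Z}\chi_p$, a $2$-cell $\rho$ in $\bb{Z}\spx$ between necklaces in $X$ together with the decoration compatibility $\rho^*\ra{U}=\ra{V}$; part (i) identifies the underlying $\rho$ with a commutative triangle $|N|\to|M|$ over $X$ restricting on the end beads, and properness promotes this to a triangle of \emph{bipointed} maps over $X_{a,b}$, since the fixed initial and terminal vertices $a,b$ are precisely the basepoints. Part (ii) identifies $\ra{U}$ and $\ra{V}$ with flags on $V_N$ and $V_M$. It then remains only to check that the decoration action $\rho^*$ of $\bb{Z}\chi_p$ agrees, on the nose, with the induced pushforward $\rho(\ra{U})$ of flags. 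I expect this final verification to be the main obstacle: one must trace the action as it was defined earlier—push each $\ra{U}_i$ down along the bead-components of $\rho$ and then compose, via union of sets, those pieces landing in a common bead of $M$—and match it against the restriction to the beads of $M$ of the single flag $\rho(\ra{U})$ on $V_M$. Granting the identity that $\rho(U^t\cap V(\text{bead}_i^N))$, unioned over the beads $i$ with $\theta(i)=j$, equals $\rho(U^t)\cap V(\text{bead}_j^M)$, the two prescriptions coincide and the bijection of (iii) follows.
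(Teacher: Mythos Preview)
Your plan is correct and follows essentially the same route as the paper: for (i) you extract the bead-assignment $\theta$ from the simplicial map and use the endpoint hypothesis to force surjectivity, for (ii) you split and glue flags along the joins, and for (iii) you assemble the two dictionaries—exactly as the paper does, only with more detail (the paper dispatches (ii) in one line and does not spell out the union-versus-restriction identity you isolate at the end). One small omission to patch in (iii): you argue that a $2$-cell realizes as a \emph{bipointed} map, but for the bijection you also need the converse, namely that any bipointed map $|N|\to|M|$ over $X_{a,b}$ automatically satisfies the end-bead restriction condition of (i) (the initial basepoint of $|M|$ lies only in the first bead, forcing $\theta(0)=0$, and dually); the paper records this explicitly, and you should too.
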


\begin{proof}
    \begin{itemize}
        \item [(i)] Let $$\phi : |N| \rightarrow |M|$$ be a morphism which restricts to maps $\phi_0 : \D{n_0} \rightarrow \D{m_0}$ and $\phi_k : \D{n_k} \rightarrow \D{m_l}$.

        In general, for $i = 0 ,\dots , k$, the map $\phi$ restricts to a simplicial map $\phi_i : \D{n_i} \rightarrow \D{m_j}$ for some $j$ which depends on $i$. This gives rise to a morphism $\theta : [k] \rightarrow [l]$ in $\D{}$.

        The map $\theta$ has to be surjective since $\theta(0) = 0$ and $\theta(k) = l$. Therefore, the maps $\phi_i$ serve as components for a natural transformation
        \[\begin{tikzcd}[column sep=scriptsize]
	{\mathcal{Z}^k} && {\mathcal{Z}^l} \\
	& \Delta
	\arrow[""{name=0, anchor=center, inner sep=0}, "N"', from=1-1, to=2-2]
	\arrow["M", from=1-3, to=2-2]
	\arrow["\theta", from=1-1, to=1-3]
	\arrow["\phi"{description}, shorten <=8pt, Rightarrow, from=0, to=1-3]
\end{tikzcd}\]
    which, by definition, is a 2-cell in $\bb{Z}\D{}$
    \[\begin{tikzcd}[column sep=scriptsize]
	{[n_0]} & {[n_k]} \\
	{[m_0]} & {[m_l]}
	\arrow[""{name=0, anchor=center, inner sep=0}, "N", squiggly, from=1-1, to=1-2]
	\arrow[""{name=1, anchor=center, inner sep=0}, "M"', squiggly, from=2-1, to=2-2]
	\arrow[from=1-1, to=2-1]
	\arrow[from=1-2, to=2-2]
	\arrow["\phi", shorten <=4pt, shorten >=4pt, Rightarrow, from=0, to=1]
\end{tikzcd}\]
The above steps are reversible, hence the desired bijection.

\item[(ii)] This is clear, since any chain $\ra{U}$ as above may be split along the joins to produce a proper decoration, and vice versa.

\item[(iii)] 2-cells $\rho^*$ as in the statement are indexed by 2-cells $\rho$ in $\bb{Z} \D{}$
\[\begin{tikzcd}[column sep=scriptsize]
	x & y \\
	{x^\prime} & {y^\prime}
	\arrow[""{name=0, anchor=center, inner sep=0}, "N", squiggly, from=1-1, to=1-2]
	\arrow[""{name=1, anchor=center, inner sep=0}, "M"', squiggly, from=2-1, to=2-2]
	\arrow[from=1-1, to=2-1]
	\arrow[from=1-2, to=2-2]
	\arrow["\rho", shorten <=4pt, shorten >=4pt, Rightarrow, from=0, to=1]
\end{tikzcd}\]
By (i), such 2-cells are precisely simplicial maps $\rho : |N| \rightarrow |M|$ which restrict in the first and last component. 

However, the fact that $\rho$ indexes $\rho^*$ forces the restrictions on $x$ and $y$ to preserve the first and last vertex respectively. Hence, such a morphism $\rho$ is just a morphism of bipointed simplicial sets over $X_{a,b}$. The fact that $\rho(\ra{U}_*) = \ra{V}_*$ follows by (ii). 

Conversely, any bipointed simplicial map $\rho : |N| \rightarrow |M|$ will automatically restrict to morphisms on $x$ and $y$. Hence the bijection.
    \end{itemize}
\end{proof}

\begin{construction}[Necklace replacement functor] \label{const_ncsrep}

We construct a functor 
$$N_* : (\bb{Z} \chi_p)_1 \rightarrow (\bb{N}_p X)_1$$

Let $(|T| \rightarrow X, \ra{U}_*) : (x,a) \rightsquigarrow (y,b)$ be an object of $\bb{Z}(\chi_p)_1$ (which is a horizontal morphism in $\bb{Z} \chi_p$). Say, the indexing zig-zag in $\cc{Z} \cc{S}(X)$ is of the form
\[\begin{tikzcd}
	& {y_1} && {y_2} & \dots & {y_k} \\
	{x = x_0} && {x_1} && \dots && {x_k = y}
	\arrow["{l_1}"', from=1-2, to=2-1]
	\arrow["{r_1}", from=1-2, to=2-3]
	\arrow["{l_2}"', from=1-4, to=2-3]
	\arrow["{r_k}", from=1-6, to=2-7]
\end{tikzcd}\]
and the decoration is given by vertices $a \in V(x)$, $a_i \in V(y_i)$ and $b \in V(y)$ together with a chain of morphisms $\ra{U}_0 : a \rightarrow a_1, \dots , \ra{U}_k : a_{k-1} \rightarrow b$ ($\ra{U}_i$ is a morphism in the category $\bb{\D{n_i}}(x_i)_p$, the dimension of $x_i$ being $n_i$). 

There is a necklace $|N_T| \rightarrow X$, equipped with an inclusion  2-cell $\epsilon_T$ in $\bb{Z} \spx$ which is displayed by the following commutative diagram
\[\begin{tikzcd}[column sep=scriptsize]
	& {a_1} && {a_2} & \dots & {a_k} \\
	{x_{\ra{aa_1}}} && {x_{1, \ra{a_1a_2}}} && \dots && {y_{\ra{a_k b}}} \\
	& {y_0} && {y_1} & \dots & {y_k} \\
	x && {x_1} && \dots && y
	\arrow["{l_1}"', from=3-2, to=4-1]
	\arrow["{r_1}", from=3-2, to=4-3]
	\arrow["{l_2}"', from=3-4, to=4-3]
	\arrow["{r_k}", from=3-6, to=4-7]
	\arrow["\omega"', from=1-2, to=2-1]
	\arrow["\alpha", from=1-2, to=2-3]
	\arrow["\omega"', from=1-4, to=2-3]
	\arrow[hook', from=2-1, to=4-1]
	\arrow[hook', from=1-2, to=3-2]
	\arrow[hook', from=2-3, to=4-3]
	\arrow[hook', from=1-4, to=3-4]
	\arrow[hook', from=1-6, to=3-6]
	\arrow[hook', from=2-7, to=4-7]
	\arrow["\alpha", from=1-6, to=2-7]
\end{tikzcd}\]
It is clear that the decoration $\ra{U}_*$ originates from the the necklace on top, so that we have a 2-cell in $\bb{Z} \chi_p$ 
\[\begin{tikzcd}
	{(x_{\ra{a a_1}}, a)} && {(y_{\ra{a_k b}}, b)} \\
	{(x,a)} && {(y,b)}
	\arrow[""{name=0, anchor=center, inner sep=0}, "{(|N_T| \rightarrow X, \ra{U}_*)}", squiggly, from=1-1, to=1-3]
	\arrow[""{name=1, anchor=center, inner sep=0}, "{(|T| \rightarrow X, \ra{U}_*)}"', squiggly, from=2-1, to=2-3]
	\arrow[from=1-1, to=2-1]
	\arrow[from=1-3, to=2-3]
	\arrow["\epsilon_T", shorten <=4pt, shorten >=4pt, Rightarrow, from=0, to=1]
\end{tikzcd}\]

The construction $(|T| \rightarrow X, \ra{U}_*) \mapsto (|N_T| \rightarrow X, \ra{U}_*)$ is clearly functorial.
Moreover, the 2-cells $\epsilon$ as above display the functor $N_*$ as the right adjoint to the inclusion $ (\bb{N}_p X)_1  \subseteq (\bb{Z} \chi_p)_1$. 
    
\end{construction}

\begin{theorem}[Dugger and Spivak, \cite{dugger2011rigidification}]
    Let $X$ be a simplicial set and $a,b \in X_0$. A $p$-arrow, $[p] \in \D{}$, in the set $(\ff{C}X)_p(a,b)$ is represented by 
    \begin{itemize}
        \item [-] A necklace $|N| \rightarrow X_{a,b}$.
        \item[-] A flag $\ra{U} =  ( U^0 \subseteq \dots \subseteq U^p )$ of subsets of $V_N$ such that $J_N \subseteq U^0$.
    \end{itemize}
    This data is subject to the following relation: two pairs $(|N| \rightarrow X, \ra{U})$ and $(|M| \rightarrow X, \ra{V})$ are identified 
    in case there is a commutative triangle in $\sset_{*,*}$
\[\begin{tikzcd}
	{|N|} && {|M|} \\
	& {X_{a,b}}
	\arrow["\rho", from=1-1, to=1-3]
	\arrow[from=1-1, to=2-2]
	\arrow[from=1-3, to=2-2]
\end{tikzcd}\]
such that $\rho(\ra{U}) = \ra{V}$.
    
\end{theorem}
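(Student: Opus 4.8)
The plan is to specialize Theorem \ref{thm_colim} to the diagram $\chi_p : \spx \to \cat$ and then use the necklace replacement functor of Construction \ref{const_ncsrep} together with the dictionary of Proposition \ref{prop_translate} to match the resulting description with the one in the statement. Throughout I fix $a,b \in X_0$ and write $\mathcal{A}(a,b)$ for the category whose objects are the $\chi_p$-decorated zig-zags $(x,a) \rightsquigarrow (y,b)$ (with $a \in V(x)$ and $b \in V(y)$ the given vertices) and whose morphisms are the $2$-cells of $\bb{Z}\chi_p$ whose vertical legs fix $a$ and $b$; that is, $\mathcal{A}(a,b)$ is the endpoint-$(a,b)$ part of the category $(\bb{Z}\chi_p)_1$. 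Reading Theorem \ref{thm_colim} for $F = \chi_p$, the morphism set is the set of decorated zig-zags modulo the equivalence relation generated by such $2$-cells, so the first step is to record the identification
\[ (\ff{C}X)_p(a,b) \;\cong\; \pi_0\,\mathcal{A}(a,b). \]

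First I would restrict Construction \ref{const_ncsrep} to these endpoint-fixed subcategories. Let $\mathcal{N}(a,b) \subseteq \mathcal{A}(a,b)$ be the full subcategory on the properly decorated necklaces; this is exactly the endpoint-$(a,b)$ part of $(\bb{N}_p X)_1$. The necklace $|N_T| \to X$ produced by the construction has initial vertex $a$ and terminal vertex $b$, so $N_*$ preserves endpoints and restricts to a functor $\mathcal{A}(a,b) \to \mathcal{N}(a,b)$; likewise the counit $2$-cells $\epsilon_T$ have endpoint-preserving vertical legs (the subsimplex inclusions send $a \mapsto a$ and $b \mapsto b$). Hence the adjunction of Construction \ref{const_ncsrep} restricts to one exhibiting $\mathcal{N}(a,b)$ as a coreflective subcategory of $\mathcal{A}(a,b)$, with inclusion $\iota$ left adjoint to $N_*$ and counit $\epsilon$.

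The next step is purely formal: an adjunction induces mutually inverse bijections on connected components. Indeed, the unit $\mathrm{Id} \Rightarrow N_*\iota$ and counit $\iota N_* \Rightarrow \mathrm{Id}$ show that $\pi_0 \iota$ and $\pi_0 N_*$ compose to the identity in both orders (a natural transformation connects each object to its image, hence is invisible to $\pi_0$). Therefore $\pi_0 \mathcal{N}(a,b) \cong \pi_0 \mathcal{A}(a,b) \cong (\ff{C}X)_p(a,b)$, and it remains only to read off $\pi_0 \mathcal{N}(a,b)$. By Proposition \ref{prop_translate}(ii) the objects of $\mathcal{N}(a,b)$ are precisely pairs $(|N| \to X_{a,b}, \ra{U})$ with $\ra{U}$ a flag on $V_N$ satisfying $J_N \subseteq U^0$; by Proposition \ref{prop_translate}(iii) its morphisms are precisely the commutative triangles of bipointed maps $\rho : |N| \to |M|$ over $X_{a,b}$ with $\rho(\ra{U}) = \ra{V}$. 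Passing to $\pi_0$ identifies two such pairs exactly when they are joined by a zig-zag of these triangles, which is the equivalence relation generated by the relation in the statement; this yields the theorem.

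The main obstacle I anticipate is entirely in the first step: justifying the clean identification $(\ff{C}X)_p(a,b) \cong \pi_0 \mathcal{A}(a,b)$, and in particular being careful that the relation ``determined by $2$-cells'' in Theorem \ref{thm_colim} is the full equivalence relation generated by $2$-cells (i.e.\ connectedness by zig-zags of $2$-cells, as computed by $\pi_0$) rather than the a priori non-symmetric ``there exists a single $2$-cell'' relation. This is exactly the role played by the symmetrized double categories $\Tilde{\bb{Z}}\chi_p$; once one agrees that the colimit records connected components, the remaining steps are formal. A secondary point to verify is that ``full sub-double category'' for $\bb{N}_p X$ does mean that every $2$-cell of $\bb{Z}\chi_p$ with necklace horizontal boundaries already lies in $\bb{N}_p X$, so that no morphisms of $\mathcal{N}(a,b)$ are lost and Proposition \ref{prop_translate}(iii) indeed captures all of them.
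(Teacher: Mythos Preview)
Your proposal is correct and follows the same route as the paper: invoke Theorem \ref{thm_colim} for $\chi_p$, replace general decorated zig-zags by properly decorated necklaces via Construction \ref{const_ncsrep}, and translate the resulting relation using Proposition \ref{prop_translate}. Your $\pi_0$/adjunction framing is a more explicit justification of the step the paper handles with a bare ``without loss of generality'' (namely, why two necklace representatives related through non-necklace intermediaries are already related inside $\bb{N}_p X$), but the architecture is identical.
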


\begin{proof}
    By Theorem \ref{thm_colim}, a $p$-arrow $a \rightarrow b$ in $\ff{C}X$ is represented by a $\chi_p$-decorated zig-zag $(|T| \rightarrow X, \ra{U}) : (x,a) \rightsquigarrow (y,b)$. We may apply the necklace replacement functor defined in Construction \ref{const_ncsrep}, so without loss of generality we may assume that $T$ is a necklace and $\ra{U}$ is a proper decoration.

    The relation such data is subject to is recorded in the 2-cells of the double category $\bb{N}_p X$. By part (ii) of Proposition \ref{prop_translate}, this relation is exactly the one stated in the theorem.
\end{proof}

\appendix

\section{Zig-zag gymnastics}

\subsection{Unital and transposition 2-cells} \label{app_nice}

We describe a couple of maneuvers which are useful in working with the double categories $\Tilde{\bb{Z}} \cc{J}$ and $\Tilde{\bb{Z}} F$. 

Let us denote by
\[\begin{tikzcd}
	i & i
	\arrow["{u_n}", Rightarrow, no head, from=1-1, to=1-2]
\end{tikzcd}\]
a zig-zag in $\cc{Z} \cc{J}$ which is given by a constant functor $u_n : \cc{Z}^n \rightarrow \cc{J}$ at $i \in \cc{J}$. Sometimes we do not specify the length of the zig-zag, but we leave it as understood in context. 

The trivial functor $\cc{Z}^n \rightarrow \cc{Z}^0$ induces a 2-cell in $\bb{Z} \cc{J}$
\[\begin{tikzcd}
	i & i \\
	i & i
	\arrow[""{name=0, anchor=center, inner sep=0}, "{u_n}", Rightarrow, no head, from=1-1, to=1-2]
	\arrow["{id_i}", from=1-2, to=2-2]
	\arrow[""{name=1, anchor=center, inner sep=0}, "{u_0}"', Rightarrow, no head, from=2-1, to=2-2]
	\arrow["{id_i}"', from=1-1, to=2-1]
	\arrow[shorten <=4pt, shorten >=4pt, Rightarrow, from=0, to=1]
\end{tikzcd}\]
We may regard the above as a 2-cell in $\Tilde{\bb{Z}} \cc{J}$ of the form
\[\begin{tikzcd}
	i & i \\
	i & i \\
	i & i
	\arrow[""{name=0, anchor=center, inner sep=0}, Rightarrow, no head, from=2-1, to=2-2]
	\arrow["{id_i}", from=2-2, to=3-2]
	\arrow[""{name=1, anchor=center, inner sep=0}, "{u_0}"', Rightarrow, no head, from=3-1, to=3-2]
	\arrow["{id_i}"', from=2-1, to=3-1]
	\arrow["{id_i}", from=2-1, to=1-1]
	\arrow["{id_i}"', from=2-2, to=1-2]
	\arrow[""{name=2, anchor=center, inner sep=0}, "{u_n}", Rightarrow, no head, from=1-1, to=1-2]
	\arrow[shorten <=4pt, shorten >=4pt, Rightarrow, from=0, to=1]
	\arrow["id"', shorten <=4pt, shorten >=4pt, Rightarrow, from=0, to=2]
\end{tikzcd}\]
In conclusion, we have 2-cells in $\Tilde{\bb{Z}} \cc{J}$ of the form
\[\begin{tikzcd}
	i & i \\
	i & i
	\arrow[""{name=0, anchor=center, inner sep=0}, "{u_n}", Rightarrow, no head, from=1-1, to=1-2]
	\arrow[""{name=1, anchor=center, inner sep=0}, "{u_0}"', Rightarrow, no head, from=2-1, to=2-2]
	\arrow[Rightarrow, no head, from=1-1, to=2-1]
	\arrow[Rightarrow, no head, from=1-2, to=2-2]
	\arrow[shorten <=4pt, shorten >=4pt, Rightarrow, squiggly, from=0, to=1]
\end{tikzcd} \ \ \ , \ \ \ 
\begin{tikzcd}[column sep=scriptsize]
	i & i \\
	i & i
	\arrow[""{name=0, anchor=center, inner sep=0}, "{u_n}"', Rightarrow, no head, from=2-1, to=2-2]
	\arrow[""{name=1, anchor=center, inner sep=0}, "{u_0}", Rightarrow, no head, from=1-1, to=1-2]
	\arrow[Rightarrow, no head, from=2-1, to=1-1]
	\arrow[Rightarrow, no head, from=2-2, to=1-2]
	\arrow[shorten <=4pt, shorten >=4pt, Rightarrow, squiggly, from=1, to=0]
\end{tikzcd}
\]
where the cell on the right is obtained by symmetry.
In fact, the lengths of the vertical zig-zags may vary as we wish. We call 2-cells as above \textit{unital}. 

In $\Tilde{\bb{Z}} \cc{J}$, horizontal and vertical morphisms are the same: they are zig-zags in $\cc{J}$. Given a zig-zag $(\Bar{u}, \Bar{v}) : i \rightsquigarrow j$, there is a distinguished 2-cell of the form
\[\begin{tikzcd}
	i & j \\
	j & j
	\arrow[""{name=0, anchor=center, inner sep=0}, "{(\Bar{u}, \Bar{v})}", squiggly, from=1-1, to=1-2]
	\arrow["{(\Bar{u}, \Bar{v})}"', squiggly, from=1-1, to=2-1]
	\arrow[""{name=1, anchor=center, inner sep=0}, Rightarrow, no head, from=2-1, to=2-2]
	\arrow[Rightarrow, no head, from=1-2, to=2-2]
	\arrow[shorten <=4pt, shorten >=4pt, Rightarrow, squiggly, from=0, to=1]
\end{tikzcd}\]
which we describe as follows.

In case of a zig-zag of length $1$, say $i \xleftarrow{u} k \xrightarrow{v} j$, the above 2-cell is given by the following commutative diagram in $\cc{J}$
\[\begin{tikzcd}
	i & k & j \\
	k & k & j \\
	j & j & j
	\arrow["id"{description}, from=2-2, to=2-1]
	\arrow["id"{description}, from=2-2, to=1-2]
	\arrow["u", from=2-1, to=1-1]
	\arrow["u"', from=1-2, to=1-1]
	\arrow["v", from=1-2, to=1-3]
	\arrow["v"', from=2-1, to=3-1]
	\arrow["v"{description}, from=2-2, to=3-2]
	\arrow["id"{description}, from=2-3, to=3-3]
	\arrow["id"{description}, from=2-3, to=1-3]
	\arrow["v"{description}, from=2-2, to=2-3]
	\arrow["id", from=3-2, to=3-1]
	\arrow["id"', from=3-2, to=3-3]
\end{tikzcd}\]
For longer zig-zags, we can proceed recursively. 

Because of symmetry, we also have 2-cells of the form
\[\begin{tikzcd}
	i & i \\
	i & j
	\arrow[""{name=0, anchor=center, inner sep=0}, Rightarrow, no head, from=1-1, to=1-2]
	\arrow[Rightarrow, no head, from=1-1, to=2-1]
	\arrow[""{name=1, anchor=center, inner sep=0}, "{(\Bar{u}, \Bar{v})}"', squiggly, from=2-1, to=2-2]
	\arrow["{(\Bar{u}, \Bar{v})}", squiggly, from=1-2, to=2-2]
	\arrow[shorten <=4pt, shorten >=4pt, Rightarrow, squiggly, from=0, to=1]
\end{tikzcd}\]

We call these two types of cells \textit{transpositions}, since they transpose zig-zags from the vertical to the horizontal direction. In general, by applying unital and transposition 2-cells, any 2-cell in $\Tilde{\bb{Z}} \cc{J}$ 
\[\begin{tikzcd}
	i & j \\
	{i^\prime} & {j^\prime}
	\arrow[""{name=0, anchor=center, inner sep=0}, squiggly, from=1-1, to=1-2]
	\arrow[""{name=1, anchor=center, inner sep=0}, squiggly, from=2-1, to=2-2]
	\arrow[squiggly, from=1-1, to=2-1]
	\arrow[squiggly, from=1-2, to=2-2]
	\arrow[shorten <=4pt, shorten >=4pt, Rightarrow, squiggly, from=0, to=1]
\end{tikzcd}\]
may be transposed to a 2-cell of the form
\[\begin{tikzcd}
	i & j & {j^\prime} \\
	i & {i^\prime} & {j^\prime}
	\arrow[squiggly, from=2-2, to=2-3]
	\arrow[squiggly, from=1-2, to=1-3]
	\arrow[squiggly, from=1-1, to=1-2]
	\arrow[squiggly, from=2-1, to=2-2]
	\arrow[Rightarrow, no head, from=1-1, to=2-1]
	\arrow[Rightarrow, no head, from=1-3, to=2-3]
	\arrow[Rightarrow, squiggly, from=1-2, to=2-2]
\end{tikzcd}\]

In the double category $\Tilde{\bb{Z}} F$, we denote by 
\[\begin{tikzcd}
	{(i,a)} & {(i,a)}
	\arrow[Rightarrow, no head, from=1-1, to=1-2]
\end{tikzcd}\]
a decoration of a trivial zig-zag $\cc{Z}^n \rightarrow \cc{J}$ by identity morphisms $id_a$. 

In $\Tilde{\bb{Z}} F$, the vertical morphisms are $Ob(F)$-decorated, and hence we may regard them as horizontal morphisms decorated by identities. It is easy to check that the above unital and transposition 2-cells exist in $\Tilde{\bb{Z}} F$ as well. 

\subsection{Proof of Proposition \ref{prop_good}} \label{app_prop}

 \begin{lemma}
    The morphism $id_{[a]}$ is well-defined.
\end{lemma}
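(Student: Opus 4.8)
The plan is to unwind what \emph{well-defined} means here: the identity $id_{[a]}$ is defined via a \emph{chosen} representative $(i,a)$ of the object $[a]$, so the entire content of the lemma is that the represented morphism is independent of this choice. Concretely, I would take two pairs $(i,a)$ and $(i',a')$ in $DC$ that represent the same object $[a] \in \cc{C}$, and show that the decorated trivial zig-zags $(\{i\}, id_a)$ and $(\{i'\}, id_{a'})$ represent the same morphism $[a] \to [a]$, i.e. that they are identified under the relation on $F$-decorated zig-zags generated by the $2$-cells of $\bb{Z}F$.

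First I would use that $Ob(\cc{C}) = \colim_{i \in \cc{J}} Ob(\cc{C}_i) \cong \pi_0 \cc{E}(ObF)$, so that $(i,a)$ and $(i',a')$, lying in the same class, are connected by a zig-zag in the category of elements $\cc{E}(ObF)$: a finite chain of morphisms $u_1, \dots, u_m$ of $\cc{J}$, each pointing forward or backward, whose images under the associated $\tilde{u}_k$ match up the intermediate objects and which starts at $(i,a)$ and ends at $(i',a')$. The key elementary move is exactly the one already exploited in the proof of Theorem \ref{thm_colim}: a morphism $u : i \to j$ in $\cc{J}$, regarded as an identity $2$-cell of $\bb{Z}\cc{J}$ between the trivial zig-zags $\{i\}$ and $\{j\}$, induces a $2$-cell $u^* : (\{i\}, id_a) \Rightarrow (\{j\}, id_{\tilde{u}(a)})$ in $\bb{Z}F$, since the action sends $id_a$ to $\tilde{u}(id_a) = id_{\tilde{u}(a)}$. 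Thus each forward step of the connecting zig-zag yields a $2$-cell of $\bb{Z}F$ whose top and bottom are precisely the two identity decorations at consecutive vertices, and concatenating these identifications along the chain would give $(\{i\}, id_a) \sim (\{i'\}, id_{a'})$.

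The main obstacle is the treatment of the \emph{backward} morphisms in the connecting zig-zag: a $2$-cell of $\bb{Z}F$ carries a fixed vertical orientation, so a backward step produces the required $2$-cell only with its top and bottom interchanged rather than a $2$-cell from the earlier identity decoration to the later one. I would dispatch this by noting that the identification on morphisms of $\cc{C}$ is the \emph{equivalence} relation generated by such $2$-cells and is therefore symmetric, so the orientation of each step is immaterial; equivalently, one may work in $\Tilde{\bb{Z}}F = \cc{Z}(\bb{Z}F)$, where zig-zags of $2$-cells are genuine morphisms and the unital and transposition cells of Section \ref{app_nice} let one pass freely between the two vertical directions. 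Once symmetry is invoked the argument is a direct concatenation along the chain $u_1, \dots, u_m$ and requires no further computation, establishing that $id_{[a]}$ is independent of the chosen representative and hence well-defined.
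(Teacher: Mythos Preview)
Your proposal is correct and follows essentially the same approach as the paper: take two representatives, use the connecting zig-zag in $\cc{E}(ObF)$, and observe that it yields the required identification of $(\{i\}, id_a)$ with $(\{i'\}, id_{a'})$ via 2-cells, invoking $\tilde{\bb{Z}}F$ to handle both orientations. The paper's proof compresses all of this into a single sentence---it simply regards the $ObF$-decorated zig-zag as a 2-cell in $\tilde{\bb{Z}}F$ directly---whereas you unpack the same content step by step, but the underlying argument is identical.
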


\begin{proof}
    Let $(i,a)$ and $(i^\prime, a^\prime)$ be two representatives for $[a]$. There exist an $ObF$-decorated zig-zag $(a,i) \rightsquigarrow (i^\prime, a^\prime)$, which we may regard as a 2-cell in $\Tilde{\bb{Z}}F$ and hence identifies $(\{ i\}, id_a)$ and $(\{ i^\prime\}, id_{a^\prime})$
\end{proof}

\begin{lemma}
    Any $ObF$-decorated zig-zag $(i,a) \rightsquigarrow (i^\prime, a^\prime)$ represents $id_{[a]}$.
\end{lemma}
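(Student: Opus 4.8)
The plan is to produce a single 2-cell in the symmetrized double category $\Tilde{\bb{Z}}F$ whose horizontal source is the trivial decoration $(\{i\}, id_a)$ (which represents $id_{[a]}$) and whose horizontal target is the given zig-zag. Since a 2-cell in $\Tilde{\bb{Z}}F = \cc{Z}(\bb{Z}F)$ is, by construction, a vertical zig-zag of 2-cells in $\bb{Z}F$, and each such constituent 2-cell implements the defining relation on the morphisms of $\cc{C}$, the existence of this 2-cell forces its horizontal source and target to represent the same morphism.

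Write $\zeta : (i,a) \rightsquigarrow (i^\prime, a^\prime)$ for the given $ObF$-decorated zig-zag, i.e. a zig-zag $(\Bar{u}, \Bar{v})$ in $\cc{J}$ all of whose decorating morphisms are identities. First I would note that $[a] = [a^\prime]$: as a zig-zag in $\cc{E}(ObF)$, $\zeta$ places $(i,a)$ and $(i^\prime, a^\prime)$ in the same connected component, so they represent the same object and both $(\{i\}, id_a)$ and $\zeta$ are candidate endomorphisms of $[a]$.

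The key step is to invoke the transposition 2-cells of Appendix \ref{app_nice}. Transposing $\zeta$ from the vertical to the horizontal direction yields, inside $\Tilde{\bb{Z}}F$, a 2-cell of the shape
\[\begin{tikzcd}
	{(i,a)} & {(i,a)} \\
	{(i,a)} & {(i^\prime,a^\prime)}
	\arrow[""{name=0, anchor=center, inner sep=0}, Rightarrow, no head, from=1-1, to=1-2]
	\arrow[Rightarrow, no head, from=1-1, to=2-1]
	\arrow[""{name=1, anchor=center, inner sep=0}, "\zeta"', squiggly, from=2-1, to=2-2]
	\arrow["\zeta", squiggly, from=1-2, to=2-2]
	\arrow[shorten <=4pt, shorten >=4pt, Rightarrow, squiggly, from=0, to=1]
\end{tikzcd}\]
whose top horizontal morphism is the trivial decoration $(\{i\}, id_a)$ and whose bottom horizontal morphism is $\zeta$, with vertical legs the trivial decoration on the left and $\zeta$ itself on the right (both $ObF$-decorated, as required). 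Because $\zeta$ carries only identity decorations, this is nothing but the identity-decorated image of the corresponding transposition 2-cell in $\Tilde{\bb{Z}}\cc{J}$, whose existence is established in Appendix \ref{app_nice}; so no compatibility with nontrivial decorations has to be verified.

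Unwinding this 2-cell into its constituent 2-cells of $\bb{Z}F$, each of which has single-morphism vertical legs and hence directly implements the defining relation, identifies $(\{i\}, id_a)$ with $\zeta$ as morphisms $[a] \to [a]$ in $\cc{C}$; therefore $\zeta$ represents $id_{[a]}$. I expect the only genuine subtlety to be the very point the transposition is designed to finesse: one cannot in general collapse a roof by a single type-(i) 2-cell of $\bb{Z}F$, because $\cc{J}$ need not provide the cocone required to form the relevant commutative square. Transposing the zig-zag from the vertical to the horizontal direction sidesteps this obstruction, which is precisely what makes the symmetrized double category $\Tilde{\bb{Z}}F$ indispensable here.
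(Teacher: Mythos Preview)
Your argument is correct and is essentially the paper's own proof: both invoke a transposition 2-cell from Appendix \ref{app_nice} to relate the given $ObF$-decorated zig-zag to a constant (identity-decorated) one. The only cosmetic difference is that the paper uses the first form of the transposition cell (placing $\zeta$ on top and the constant zig-zag at $(i^\prime,a^\prime)$ on the bottom), whereas you use the symmetric second form (constant at $(i,a)$ on top, $\zeta$ on the bottom); either choice works, and in both cases one tacitly composes with a unital 2-cell to reduce the constant zig-zag to the genuine length-zero $(\{i\}, id_a)$.
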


\begin{proof}
    For a chosen $ObF$-decorated zig-zag, we have a 2-cell in $\title{\bb{Z}}F$ of the form
    \[\begin{tikzcd}
	{(i,a)} & {(i^\prime, a^\prime)} \\
	{(i^\prime, a^\prime)} & {(i^\prime, a^\prime)}
	\arrow[squiggly, from=1-1, to=2-1]
	\arrow[""{name=0, anchor=center, inner sep=0}, Rightarrow, no head, from=2-1, to=2-2]
	\arrow[""{name=1, anchor=center, inner sep=0}, squiggly, from=1-1, to=1-2]
	\arrow[Rightarrow, no head, from=1-2, to=2-2]
	\arrow[shorten <=4pt, shorten >=4pt, Rightarrow, squiggly, from=1, to=0]
\end{tikzcd}\]
The zig-zag on the bottom is evidently equivalent to $(\{ i^\prime \}, id_{a^\prime})$. 
\end{proof}

\begin{lemma} \label{lemma_1}
    A morphism $[a] \rightarrow [b]$ in $\cc{C}$ represented by a $F$-decorated zig-zag $(i,a) \rightsquigarrow (j,b)$, is also represented by the composite of the latter with any $ ObF$-decorated zig-zag $(j,b) \rightsquigarrow (j^\prime, b^\prime)$. 
\end{lemma}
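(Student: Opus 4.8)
The plan is to prove that the two $F$-decorated zig-zags represent one and the same morphism of $\cc{C}$ by producing a single 2-cell between them in the symmetrized double category $\Tilde{\bb{Z}} F$. Write $\phi = (\Bar{l}, \Bar{r}, \Bar{f}) : (i,a) \rightsquigarrow (j,b)$ for the given decorated zig-zag and $\psi : (j,b) \rightsquigarrow (j^\prime, b^\prime)$ for the chosen $ObF$-decorated zig-zag. By the definition of $\cc{C}$ in Section \ref{sec_colimthm}, two decorated zig-zags represent the same morphism as soon as they are connected by 2-cells in $\bb{Z} F$, and a vertical zig-zag of such 2-cells is by construction exactly a 2-cell of $\Tilde{\bb{Z}} F = \cc{Z}(\bb{Z} F)$. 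It therefore suffices to exhibit a 2-cell in $\Tilde{\bb{Z}} F$ whose horizontal source is $\phi$ and whose horizontal target is the composite $\psi \circ \phi$.

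The construction I would carry out uses the transposition cells of Appendix \ref{app_nice}. Since $\psi$ is $ObF$-decorated, it is simultaneously a horizontal and a vertical morphism of $\Tilde{\bb{Z}} F$, so the transposition maneuver applies to it: there is a cell with trivial horizontal source at $(j,b)$, horizontal target $\psi$, trivial left leg, and right vertical leg $\psi$, transposing $\psi$ from the vertical to the horizontal direction. Whiskering this cell on the left by $\phi$ — that is, composing it horizontally with the identity 2-cell on $\phi$ — produces the cell
\[\begin{tikzcd}
	{(i,a)} & {(j,b)} \\
	{(i,a)} & {(j^\prime, b^\prime)}
	\arrow[""{name=0, anchor=center, inner sep=0}, "\phi", squiggly, from=1-1, to=1-2]
	\arrow[Rightarrow, no head, from=1-1, to=2-1]
	\arrow["\psi", squiggly, from=1-2, to=2-2]
	\arrow[""{name=1, anchor=center, inner sep=0}, "{\psi \circ \phi}"', squiggly, from=2-1, to=2-2]
	\arrow[shorten <=4pt, shorten >=4pt, Rightarrow, squiggly, from=0, to=1]
\end{tikzcd}\]
Its horizontal source is the concatenation of $\phi$ with the trivial zig-zag at $(j,b)$, namely $\phi$ itself, and its horizontal target is the concatenation $\psi \circ \phi$, precisely as desired.

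Since the right leg $\psi$ connects the two representatives $(j,b)$ and $(j^\prime, b^\prime)$ of the same object $[b]$, both horizontal edges are legitimate morphisms $[a] \to [b]$, and the cell above identifies them in $\cc{C}$. The only point demanding genuine care — and the step I expect to be the main obstacle — is the verification that a 2-cell of $\Tilde{\bb{Z}} F$ really does enforce the defining relation of $\cc{C}$: unwinding $\cc{Z}(\bb{Z} F)$, such a cell is a finite vertical zig-zag of honest 2-cells of $\bb{Z} F$, each of which identifies its source and target by definition, and composing these identifications yields the claim. The remaining tasks are bookkeeping: confirming that the transposition cells are indeed available in the decorated setting (granted by Appendix \ref{app_nice}) and that horizontal composition with the identity 2-cell on $\phi$ returns the concatenated decoration $\psi \circ \phi$ on the nose, rather than only up to the relation.
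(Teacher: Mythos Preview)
Your approach is essentially the same as the paper's: use the transposition cell of Appendix~\ref{app_nice} for the $ObF$-decorated zig-zag $\psi$, and whisker on the left with the identity 2-cell on $\phi$. The paper displays this as a three-row diagram in $\Tilde{\bb{Z}} F$, with an intermediate row $(i,a) \rightsquigarrow (j,b) = (j,b)$ consisting of $\phi$ followed by a constant zig-zag at $(j,b)$; the passage from the top row to this middle row is a unital cell, and the passage from the middle row to the bottom row is exactly your whiskered transposition.

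The one place where you cut a corner is the sentence ``Its horizontal source is the concatenation of $\phi$ with the trivial zig-zag at $(j,b)$, namely $\phi$ itself.'' The transposition cells built in Appendix~\ref{app_nice} have as their horizontal source a \emph{constant} zig-zag $u_n$ of the same length as $\psi$, not the length-zero identity $u_0$. Consequently the horizontal source of your whiskered cell is $\phi \otimes u_n$, not literally $\phi$; a further unital cell is needed to pass from one to the other. That is precisely the role of the top 2-cell in the paper's three-row diagram, and it is the bookkeeping step you flagged but then asserted holds ``on the nose.'' It does not --- it holds up to a unital cell, which is enough, but should be said.
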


\begin{proof}
    We have the following diagram in $\Tilde{\bb{Z}} F$
    \[\begin{tikzcd}[column sep=scriptsize]
	{(i,a)} && {(j,b)} \\
	{(i,a)} & {(j,b)} & {(j,b)} \\
	{(i,a)} & {(j,b)} & {(j^\prime, b^\prime)}
	\arrow[""{name=0, anchor=center, inner sep=0}, squiggly, from=3-1, to=3-2]
	\arrow[""{name=1, anchor=center, inner sep=0}, squiggly, from=3-2, to=3-3]
	\arrow[squiggly, from=2-3, to=3-3]
	\arrow[Rightarrow, no head, from=2-2, to=3-2]
	\arrow[""{name=2, anchor=center, inner sep=0}, Rightarrow, no head, from=2-2, to=2-3]
	\arrow[Rightarrow, no head, from=2-1, to=3-1]
	\arrow[""{name=3, anchor=center, inner sep=0}, squiggly, from=2-1, to=2-2]
	\arrow[""{name=4, anchor=center, inner sep=0}, squiggly, from=1-1, to=1-3]
	\arrow[Rightarrow, no head, from=1-1, to=2-1]
	\arrow[Rightarrow, no head, from=1-3, to=2-3]
	\arrow[shorten <=4pt, shorten >=4pt, Rightarrow, squiggly, from=2, to=1]
	\arrow[shorten <=4pt, shorten >=4pt, Rightarrow, squiggly, from=3, to=0]
	\arrow[shorten <=3pt, Rightarrow, squiggly, from=4, to=2-2]
\end{tikzcd}\]
\end{proof}

\begin{lemma} \label{lemma_2}
    Given 2-cells in $\title{\bb{Z}}(F)$
    \[\begin{tikzcd}[column sep=scriptsize]
	{(i,a)} & {(j,b)} \\
	{(i^\prime, a^\prime)} & {(j^\prime, b^\prime)}
	\arrow[squiggly, from=1-2, to=2-2]
	\arrow[squiggly, from=1-1, to=2-1]
	\arrow[""{name=0, anchor=center, inner sep=0}, squiggly, from=1-1, to=1-2]
	\arrow[""{name=1, anchor=center, inner sep=0}, squiggly, from=2-1, to=2-2]
	\arrow[shorten <=4pt, shorten >=4pt, Rightarrow, squiggly, from=0, to=1]
\end{tikzcd} \ \ , \ \ 
\begin{tikzcd}[column sep=scriptsize]
	{(j,b)} & {(c,k)} \\
	{(j^\prime, b^\prime)} & {(c^\prime, k^\prime)}
	\arrow[squiggly, from=1-2, to=2-2]
	\arrow[""{name=0, anchor=center, inner sep=0}, squiggly, from=1-1, to=1-2]
	\arrow[""{name=1, anchor=center, inner sep=0}, squiggly, from=2-1, to=2-2]
	\arrow[squiggly, from=1-1, to=2-1]
	\arrow[shorten <=4pt, shorten >=4pt, Rightarrow, squiggly, from=0, to=1]
\end{tikzcd}
\]
there is alsways a 2-cell
\[\begin{tikzcd}[column sep=scriptsize]
	{(i,a)} & {(j,b)} & {(c,k)} \\
	{(i^\prime, a^\prime)} & {(j^\prime, b^\prime)} & {(c^\prime, k^\prime)}
	\arrow[squiggly, from=1-1, to=2-1]
	\arrow[squiggly, from=1-1, to=1-2]
	\arrow[squiggly, from=2-1, to=2-2]
	\arrow[squiggly, from=1-3, to=2-3]
	\arrow[squiggly, from=1-2, to=1-3]
	\arrow[squiggly, from=2-2, to=2-3]
	\arrow[Rightarrow, squiggly, from=1-2, to=2-2]
\end{tikzcd}\]
whose top and bottom are the composites of the tops and bottoms of the given 2-cells.
\end{lemma}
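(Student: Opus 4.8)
The plan is to deduce the statement from the ordinary horizontal composition of $2$-cells in the double category $\tilde{\bb{Z}} F$, which is available as soon as the two cells share their connecting vertical edge. The right vertical edge of the first cell and the left vertical edge of the second are both zig-zags $(j,b) \rightsquigarrow (j',b')$ in $\cc{E}(ObF)$, say $w_1$ and $w_2$, and these need not agree on the nose; reconciling them is where all the work lies. So the real task is to massage the two given cells, using only the unital and transposition cells of Appendix \ref{app_nice}, into a configuration that composes horizontally and still reads off the desired top and bottom.

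Concretely, first I would name the connecting edges $w_1, w_2 \colon (j,b) \rightsquigarrow (j',b')$ and observe that their endpoints match by hypothesis. Then I would apply transposition to displace each connecting edge out of the vertical direction: transpose the first cell so that $w_1$ is folded up into its top-right boundary, and the second so that $w_2$ is folded up into its top-left boundary, leaving both cells with trivial connecting vertical edges. Trivial edges always coincide, so the transposed cells are now horizontally composable; I would form that composite and then transpose back, pulling the outer edges $v$ (on the left) and $z$ (on the right) out of the horizontal direction to restore the boundary $(i,a) \rightsquigarrow (i',a')$ and $(c,k) \rightsquigarrow (c',k')$. The unital cells enter throughout to equalise the lengths of the various degenerate zig-zags produced by these foldings, so that every intermediate concatenation is genuinely defined. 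As in the proof of Lemma \ref{lemma_1}, I would assemble the entire argument into a single explicit pasting diagram in $\tilde{\bb{Z}} F$ and read the conclusion off its outer boundary, whose top and bottom are the concatenations of the two given tops and of the two given bottoms.

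The step I expect to be the main obstacle is exactly this reconciliation of $w_1$ and $w_2$: after the transpositions one must verify that the pieces absorbed into the horizontal direction cancel correctly, so that the top and bottom emerge as the bare composites $U_1 \otimes U_2$ and $U_1' \otimes U_2'$ rather than carrying leftover copies of $w_1$ or $w_2$. This is a coherence check for the transposition and unital maneuvers, and it is the delicate point of the whole appendix; the bookkeeping is precisely what the unital and transposition cells of Appendix \ref{app_nice} were introduced to control, and I would discharge it by exhibiting the connecting cells one roof at a time and composing recursively along the length of the zig-zags.
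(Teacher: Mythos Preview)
Your proposal is correct and follows essentially the same approach as the paper: use the transposition and unital maneuvers of Appendix~\ref{app_nice} to push the mismatched connecting verticals $w_1,w_2$ into the horizontal direction, so that the two cells acquire trivial verticals along their shared boundary and become horizontally composable.

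The only difference is one of packaging. The paper transposes \emph{all four} vertical edges at once into the general form displayed at the end of Appendix~\ref{app_nice}, adjusts the lengths of the resulting identity verticals so the two rectangles line up, and then cites Lemma~\ref{lemma_1} (really the explicit $2$-cell built in its proof) to absorb the residual $ObF$-decorated segments $w_1,w_2,v,z$ that now sit inside the horizontal boundary. You instead plan to transpose only the connecting edges, compose, and then transpose the outer edges back, checking the cancellation of $w_1$ and $w_2$ by hand ``one roof at a time''. Both routes work; the paper's shortcut is precisely that your anticipated coherence check is already discharged by the diagram in the proof of Lemma~\ref{lemma_1}, so you need not redo it recursively.
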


\begin{proof}
    We can transpose the given 2-cells in the form 
    \[
    \begin{tikzcd}[column sep=scriptsize]
	{(i,a)} & {(j,b)} & {(j^\prime, b^\prime)} \\
	{(i,a)} & {(i^\prime, a^\prime)} & {(j^\prime, b^\prime)}
	\arrow[squiggly, from=2-2, to=2-3]
	\arrow[squiggly, from=2-1, to=2-2]
	\arrow[Rightarrow, no head, from=1-1, to=2-1]
	\arrow[squiggly, from=1-1, to=1-2]
	\arrow[squiggly, from=1-2, to=1-3]
	\arrow[Rightarrow, no head, from=1-3, to=2-3]
	\arrow[Rightarrow, squiggly, from=1-2, to=2-2]
\end{tikzcd} \ \ , \ \ 
    \begin{tikzcd}[column sep=scriptsize]
	{(j,b)} & {(c,k)} & {(c^\prime, k^\prime)} \\
	{(j,b)} & {(j^\prime, b^\prime)} & {(c^\prime, k^\prime)}
	\arrow[squiggly, from=2-2, to=2-3]
	\arrow[squiggly, from=2-1, to=2-2]
	\arrow[Rightarrow, no head, from=1-1, to=2-1]
	\arrow[squiggly, from=1-1, to=1-2]
	\arrow[squiggly, from=1-2, to=1-3]
	\arrow[Rightarrow, no head, from=1-3, to=2-3]
	\arrow[Rightarrow, squiggly, from=1-2, to=2-2]
\end{tikzcd}
\]
Moreover, by adjusting the lengths of the vertical identities, these 2-cells are composable. The desired conclusion follows from the previous lemma.
\end{proof}

\begin{lemma}
    Composition in $\cc{C}$ is well-defined.
\end{lemma}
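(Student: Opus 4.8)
The plan is to show that the representative $\alpha\cdot\mu\cdot\gamma$ produced by the recipe does not depend on any of the choices entering it: the representatives $\alpha : (i,a)\rightsquigarrow(j,b)$ and $\gamma : (\hat{j},\hat{b})\rightsquigarrow(k,c)$ of the two morphisms $[a]\to[b]$ and $[b]\to[c]$, nor the connecting $ObF$-decorated zig-zag $\mu : (j,b)\rightsquigarrow(\hat{j},\hat{b})$. Throughout I would use the working characterization, already implicit in the construction of $\cc{C}$ and in the preceding lemmas, that two $F$-decorated zig-zags represent the same morphism of $\cc{C}$ exactly when they arise as the horizontal source and target of a single 2-cell in $\Tilde{\bb{Z}}F$; the vertical boundaries of such a cell are $ObF$-decorated and therefore represent identities. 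I would then reduce well-definedness to two independent checks, dispatching each with Lemmas \ref{lemma_1} and \ref{lemma_2} together with the unital and transposition cells of Section \ref{app_nice}.

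First I would dispose of the dependence on the connecting zig-zag. If $\mu,\mu' : (j,b)\rightsquigarrow(\hat{j},\hat{b})$ are two choices, then by Lemma \ref{lemma_1} both $\alpha\cdot\mu$ and $\alpha\cdot\mu'$ represent the morphism $[a]\to[b]$ determined by $\alpha$, so there is a 2-cell in $\Tilde{\bb{Z}}F$ between them which, after composing with suitable unital cells from Section \ref{app_nice}, may be arranged to have trivial right boundary at $(\hat{j},\hat{b})$. Horizontally composing this cell with the identity 2-cell on $\gamma$ by means of Lemma \ref{lemma_2} yields a 2-cell from $\alpha\cdot\mu\cdot\gamma$ to $\alpha\cdot\mu'\cdot\gamma$, so the composite is independent of $\mu$. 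In particular I then have the freedom to choose the connecting zig-zag conveniently in the remaining argument.

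Next I would treat the representatives. Suppose $\rho : \alpha\Rightarrow\alpha'$ and $\sigma : \gamma\Rightarrow\gamma'$ are 2-cells in $\Tilde{\bb{Z}}F$ witnessing $\alpha\sim\alpha'$ and $\gamma\sim\gamma'$, and write $\nu_R : (j,b)\rightsquigarrow(j',b')$ for the right boundary of $\rho$ and $\sigma_L : (\hat{j},\hat{b})\rightsquigarrow(\hat{j}',\hat{b}')$ for the left boundary of $\sigma$, both $ObF$-decorated. To apply Lemma \ref{lemma_2} I need $\rho$, a middle cell $\tau$, and $\sigma$ to be horizontally composable, which forces $\tau$ to have top the connecting zig-zag $\mu$, bottom the connecting zig-zag $\mu'$, left boundary $\nu_R$ and right boundary $\sigma_L$. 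Using the freedom established above, I would take $\mu$ arbitrary and set $\mu' := \bar{\nu}_R\cdot\mu\cdot\sigma_L$, where $\bar{\nu}_R : (j',b')\rightsquigarrow(j,b)$ is $\nu_R$ traversed backwards; the endpoints then match, $\mu'$ being a legitimate connecting zig-zag from the target $(j',b')$ of $\alpha'$ to the source $(\hat{j}',\hat{b}')$ of $\gamma'$. Since the bottom of the required square is precisely the reverse of its left side, followed by its top, followed by its right side, the square can be filled by the transposition and unital cells of Section \ref{app_nice}. Horizontally composing $\rho$, $\tau$, $\sigma$ through Lemma \ref{lemma_2} then produces a 2-cell from $\alpha\cdot\mu\cdot\gamma$ to $\alpha'\cdot\mu'\cdot\gamma'$, which is what well-definedness demands.

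I expect the construction of the bridging cell $\tau$ to be the main obstacle, since it is where the mismatch between the fixed boundary data of $\alpha$ and $\gamma$ and their images under $\rho$ and $\sigma$ must be reconciled; everything else is a routine application of transitivity of the relation defining morphisms of $\cc{C}$. The two ingredients that make $\tau$ available are exactly the freedom in the connecting zig-zag (allowing me to tailor $\mu'$ to the boundaries of $\rho$ and $\sigma$) and the transposition cells of Section \ref{app_nice}, which slide an $ObF$-decorated zig-zag from a vertical boundary into the horizontal direction. This completes the verification and, together with the earlier lemmas, establishes Proposition \ref{prop_good}.
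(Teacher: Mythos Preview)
Your argument is correct, but it takes a longer path than the paper's. The paper does not separate the two checks: it simply observes, via Lemma~\ref{lemma_1}, that the composites $\alpha_1\cdot\mu_1$ and $\alpha_2\cdot\mu_2$ both represent $[a]\to[b]$, so a 2-cell in $\tilde{\bb{Z}}F$ exists between them; likewise $\gamma_1$ and $\gamma_2$ both represent $[b]\to[c]$. A single application of Lemma~\ref{lemma_2} then glues these two 2-cells into one relating the triple composites. By absorbing the connecting zig-zag into the left factor, the paper avoids constructing your bridging cell $\tau$ altogether.

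One small point: you over-read Lemma~\ref{lemma_2}. It requires only that the \emph{corner objects} of the two given 2-cells agree, not that their vertical boundaries coincide; the proof of Lemma~\ref{lemma_2} handles the mismatch by transposition. So when you write that horizontal composability ``forces $\tau$ to have left boundary $\nu_R$ and right boundary $\sigma_L$'', that is stronger than needed --- any 2-cell from $\mu$ to some $\mu'$ with the correct corner objects would do, and the existence of such a cell is immediate from the earlier lemma that every $ObF$-decorated zig-zag represents an identity. Your explicit choice $\mu'=\bar\nu_R\cdot\mu\cdot\sigma_L$ and the filling by transposition cells is valid, but it is extra work that the paper's organization sidesteps. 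What your decomposition buys is a clear separation of the two sources of ambiguity (the connector versus the representatives); what the paper's buys is brevity.
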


\begin{proof}
    Let $[a] \rightarrow [b]$ and $[b] \rightarrow [c]$ be composable morphisms in $\cc{C}$. Assume the composite morphism is represented by two triples composites of decorated zig-zags
    \[\begin{tikzcd}
	{(i_1, a_1)} & {(j_1, b_1)} & {(j_1^\prime, b_1^\prime)} & {(k_1, c_1)}
	\arrow[squiggly, from=1-1, to=1-2]
	\arrow[squiggly, from=1-2, to=1-3]
	\arrow[squiggly, from=1-3, to=1-4]
\end{tikzcd}\]
and
\[\begin{tikzcd}
	{(i_2, a_2)} & {(j_2, b_2)} & {(j_2^\prime, b_2^\prime)} & {(k_2, c_2)}
	\arrow[squiggly, from=1-1, to=1-2]
	\arrow[squiggly, from=1-2, to=1-3]
	\arrow[squiggly, from=1-3, to=1-4]
\end{tikzcd}\]

We have a 2-cell 
\[\begin{tikzcd}
	{(i_1, a_1)} & {(j_1, b_1)} & {(j_1^\prime, b_1^\prime)} \\
	{(i_2, a_2)} & {(j_2, b_2)} & {(j_2^\prime, b_2^\prime)}
	\arrow[squiggly, from=1-1, to=1-2]
	\arrow[squiggly, from=1-2, to=1-3]
	\arrow[squiggly, from=2-1, to=2-2]
	\arrow[squiggly, from=2-2, to=2-3]
	\arrow[squiggly, from=1-1, to=2-1]
	\arrow[squiggly, from=1-3, to=2-3]
	\arrow[Rightarrow, squiggly, from=1-2, to=2-2]
\end{tikzcd}\]
since by Lemma \ref{lemma_1}, the top and bottom composites both represent $[a] \rightarrow [b]$. Similarly, we also have a 2-cell 
\[\begin{tikzcd}
	{(j_1^\prime, b_1^\prime)} & {(k_1, c_1)} \\
	{(j_2^\prime, b_2^\prime)} & {(k_2, c_2)}
	\arrow[""{name=0, anchor=center, inner sep=0}, squiggly, from=1-1, to=1-2]
	\arrow[""{name=1, anchor=center, inner sep=0}, squiggly, from=2-1, to=2-2]
	\arrow[squiggly, from=1-2, to=2-2]
	\arrow[squiggly, from=1-1, to=2-1]
	\arrow[shorten <=4pt, shorten >=4pt, Rightarrow, squiggly, from=0, to=1]
\end{tikzcd}\]

By Lemma \ref{lemma_2}, there exist a 2-cell
\[\begin{tikzcd}
	{(i_1, a_1)} & {(j_1, b_1)} & {(j_1^\prime, b_1^\prime)} & {(k_1, c_1)} \\
	{(i_2, a_2)} & {(j_2, b_2)} & {(j_2^\prime, b_2^\prime)} & {(k_2, c_2)}
	\arrow[squiggly, from=1-1, to=1-2]
	\arrow[""{name=0, anchor=center, inner sep=0}, squiggly, from=1-2, to=1-3]
	\arrow[squiggly, from=2-1, to=2-2]
	\arrow[""{name=1, anchor=center, inner sep=0}, squiggly, from=2-2, to=2-3]
	\arrow[squiggly, from=1-1, to=2-1]
	\arrow[squiggly, from=1-3, to=1-4]
	\arrow[squiggly, from=2-3, to=2-4]
	\arrow[squiggly, from=1-4, to=2-4]
	\arrow[shorten <=4pt, shorten >=4pt, Rightarrow, squiggly, from=0, to=1]
\end{tikzcd}\]
which identifies the triple composites used in expressing the composition in $\cc{C}$.
\end{proof}

It easily follows that composition in $\cc{C}$ is unital and associative. 

\printbibliography

\end{document}